\newcommand{\Z}{\mathbb Z}
\newcommand{\C}{\mathbb C}
\newcommand{\R}{\mathbb R}
\newtheorem{thm}{Theorem}
\newtheorem{prop}[thm]{Proposition}
\newtheorem{lem}[thm]{Lemma}
\newtheorem{cor}[thm]{Corollary}
\newtheorem{con}[thm]{Conjecture}
\theoremstyle{remark}
\newtheorem{rem}[thm]{Remark}
\newcommand{\Oq}{\mathcal O_q}
\newcommand{\SLd}{\mathrm {SL}_d}
\newcommand{\SL}{\mathrm {SL}}
\newcommand{\GLd}{\mathrm {GL}_d}
\newcommand{\GL}{\mathrm {GL}}
\newcommand{\SSS}{\mathcal S^q_{\SLd}}
\newcommand{\SLtwo}{\mathrm {SL}_2}
\newcommand{\SSStwo}{\mathcal S^q_{\SLtwo}}
\newcommand{\SLthree}{\mathrm {SL}_3}
\newcommand{\SSSthree}{\mathcal S^q_{\SLthree}}
\newcommand{\Aio}{\SSS(A)_{\mathrm{io}}}
\newcommand{\Tr}{\mathop{\mathrm{Tr}}}
\renewcommand{\leq}{\leqslant}
\renewcommand{\geq}{\geqslant}
\renewcommand{\phi}{\varphi}
\newcommand{\qint}[1]{\left[#1\right]_{q}}
\newcommand{\qfact}[1]{\left[#1\right]_{q}!}
\newcommand{\qbinom}[2]{{\genfrac{[}{]}{0pt}{}{#1}{#2}}_q}
\title[Central elements in the skein algebra]{Central elements in the $\SLd$--skein algebra 
\\ of a surface}
\author{Francis Bonahon}
\address{Department of Mathematics, University of Southern California, Los Angeles CA 90089-2532, U.S.A.}
\email{fbonahon@usc.edu}
\address{Department of Mathematics, Michigan State University, East Lansing MI 48824, U.S.A.}
\email{bonahonf@msu.edu}
\author{Vijay Higgins}
\address{Department of Mathematics, Michigan State University, East Lansing MI 48824, U.S.A.}
\email{higgi231@msu.edu}
\date{\today}
\thanks{This work was developed under the auspices of  the Research Training Grant DMS-2135960, \emph{RTG: Algebraic and Geometric Topology at Michigan State}, from the U.S. National Science Foundation.}
\begin{document}

 \maketitle

\begin{abstract}
The $\SLd$--skein algebra $\SSS(S)$ of a surface $S$ is a certain deformation of the coordinate ring of the character variety consisting of flat $\SLd$--local systems over the surface. As a quantum topological object, $\SSS(S)$ is also closely related to the HOMFLYPT polynomial invariant of knots and links in $\R^3$. We exhibit a very rich family of central elements in this algebra $\SSS(S)$ that appear when the quantum parameter $q$ is a root of unity. These central elements are obtained by threading along framed links certain polynomials arising in the elementary theory of symmetric functions, and related to taking powers in $\SLd$. 
 
\end{abstract}
 
 Let $\GLd$ denote the general linear group of invertible $d$-by-$d$ matrices, and let the special linear group $\SLd$ consist of those matrices that have determinant 1.
 The \emph{$\SLd$--skein module} $\SSS(M)$ of an oriented 3-dimensional manifold $M$ is a certain deformation of the coordinate ring of the character variety
 $$
 \mathcal X_{\SLd} (M) = \{ \text{homomorphisms } r \colon \pi_1(M) \to \SLd \} /\kern -3pt/\GLd,
 $$
 where $\GLd$ acts on the set of homomorphisms $r \colon \pi_1(M) \to \SLd $ by conjugation. This quantum deformation depends on a quantum parameter $q$, and more precisely on a $d$--root $q^{\frac1d}$. In its current incarnations, the motivation for this mathematical object arises from Witten's topological quantum field theory interpretation of the Jones polynomials and other knot invariants \cite{Wit}, where the elements of $\SSS(M)$ occur as morphisms. In particular, it is closely related to the HOMFLYPT invariant of knots and links in $\R^3$ \cite{Yok}. 
 
 The elements of  $\SSS(M)$ can be represented by linear combinations of framed links in $M$ where each component is colored by an integer $i \in \{1,2,\dots, d-1\}$, standing for the $i$--th exterior power of the defining representation of the quantum group $\mathrm U_q(\mathfrak{sl}_d)$. The relations satisfied by these generators correspond to the full set of relations between tensor products of these representations in the braided tensor category of representations of $\mathrm U_q(\mathfrak{sl}_d)$ \cite{CKM}. When $d>2$, these relations are better expressed in terms of more complicated objects called $\SL_d$--webs; see \S \ref{sect:SkeinModule}. 
 
 A special case of interest is the one where $M$ is equal to the thickening $S\times [0,1]$ of an oriented surface $S$ of finite topological type, in which case the resulting skein module $\SSS(S) = \SSS \big( S \times [0,1] \big)$ is endowed with a natural multiplication by superposition; see \S \ref{sect:SkeinAlgebra}. The viewpoint of \cite{Wit} involves representations of this algebra $\SSS(S)$ and, if we want these representations to have finite dimension, it is natural to require that the quantum parameter $q$ be a root of unity. 
 
 In the special case where $d=2$ and $q$ is a primitive $n$--root of unity with $n$ odd, Helen Wong and the first author \cite{BonWon} discovered unexpected central elements in the skein algebra $\SSStwo(S)$, based on the Chebyshev polynomial of the first type $T_n \in \Z[e]$; see \cite{Le1} for versions when $n$ is even. Frohman, Kania-Bartoszy\'nska and L\^e \cite{FroKanBarLe}  later proved that these elements, together with the more obvious central elements associated to punctures that occur for all $q$, generate the whole center of $\SSS(S)$. This, together with a combination of results from \cite{BonWon, FroKanBarLe, GanJorSaf}, led to a classification of ``most'' finite-dimensional representations of $\SSStwo(S)$, in terms of points in a certain finite branched cover of the character variety $ \mathcal X_{\mathrm{SL}_2} (M) $. 
 
 The current article is devoted to the development of similar central elements in the $\SLd$--skein algebra $\SSS(S)$, when $q$ is a root of unity. 
 
 In the case of $\SLtwo$, the regular functions on $\SLtwo$ that are invariant under conjugation by elements of $\mathrm{GL}_2$ form a polynomial algebra generated by the trace function $\Tr$, and the Chebyshev polynomial $T_n$ is determined by the property that $\Tr A^n = T_n(\Tr A)$ for every $A\in \mathrm{SL}_2$. For $\SLd$, the algebra of $\GLd$--invariant regular functions is a polynomial algebra in $d-1$ variables, corresponding to the elementary symmetric polynomials $E_d^{(1)}$, $E_d^{(2)}$, \dots, $E_d^{(d-1)}$ in the eigenvalues. These are also related to the coefficients of the characteristic polynomial by the property that
 $$
 \det(A+ t\, \mathrm{Id}_d ) =  t^d + t^{d-1} E_d^{(1)}(A) +  t^{d-2} E_d^{(2)}(A) + \dots   + t E_d^{(d-1)}(A) + 1
 $$
 for every $A\in \SLd$. An immediate consequence of the elementary theory of symmetric functions  is that, for every $n\geq 1$ and for every $i \in \{1,2,\dots, d-1\}$, there is a unique polynomial $\widehat P_d^{(n,i)} \in \Z[e_1, e_2, \dots, e_{d-1}]$ such that
 $$
 E_d^{(i)}(A^n) = \widehat P_d^{(n,i)} \big( E_d^{(1)}(A), E_d^{(2)}(A), \dots, E_d^{(d-1)}(A) \big)
 $$
 for every $A\in \SLd$; see \S \ref{sect:PowerPols}. We call these polynomials $\widehat P_d^{(n,i)} $ the \emph{reduced power elementary polynomials}. For instance, when $d=2$, there is only one such polynomial $\widehat P_2^{(n,1)}$ for every $n$, and this polynomial is just the Chebyshev polynomial $T_n$. 
 
 Our new central elements in $\SSS(S)$ are based on the threading operation along polynomials that was already  at the basis of \cite{BonWon}. For a  framed knot $L$ in a 3-manifold $M$, the threading along a polynomial 
  $$
 P = \sum_{i_1, i_2, \dots, i_{d-1}=0}^{i_{\mathrm{max}}} a_{i_1i_2 \dots i_{d-1}} e_1^{i_1}e_2^{i_2} \dots e_{d-1}^{i_{d-1}}  \in \Z[e_1, e_2, \dots, e_{d-1}]
 $$
 associates to $L$ the skein
 $$
 L^{[P]} =  \sum_{i_1, i_2, \dots, i_{d-1}=0}^{i_{\mathrm{max}}} a_{i_1i_2 \dots i_{d-1}}  L^{[ e_1^{i_1}e_2^{i_2} \dots e_{d-1}^{i_{d-1}}]} \in \SSS(M),
 $$
 where
 $ L^{[ e_1^{i_1}e_2^{i_2} \dots e_{d-1}^{i_{d-1}}]} \in \SSS(M)$ is represented by the union of $i_1+i_2+\dots i_{d-1}$ disjoint parallel copies of the knot $L$, taken in the direction of the framing, and with $i_1$ of these copies carrying the weight $1$, $i_2$ carrying the weight $2$, \dots, and $i_{d-1}$ carrying the weight $d-1$. A similar construction applies to links $L$ with several components. See \S \ref{sect:Thread} for details.

\begin{thm}
\label{thm:PowerElementaryThreadingCentralIntro}
 Suppose that the $d$--root $q^{\frac 1d}$ occurring in the definition of skein modules $\SSS(M)$ is such that $q^{\frac {2n}d}=1$, and that $q^{2i}\neq 1$ for every integer $i$ with $2\leq i \leq \frac d2$. In a thickened surface $S \times [0,1]$, let $L=L_1 \sqcup L_2 \sqcup \dots \sqcup L_c$ be a framed link in which each component $L_j$ carries a weight $i_j \in \{1,2, \dots, d-1\}$. Then the skein $L^{[\widehat P_d^{(n, \bullet)}]} \in \SSS(S)$ obtained by threading the reduced power elementary polynomial $\widehat P_d^{(n, i_j)} \in \Z[e_1, e_2, \dots, e_{d-1}]$ along each component $L_j$ is central in the skein algebra $\SSS(S)$ of the surface $S$.  
 \end{thm}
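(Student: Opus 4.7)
My plan adapts the strategy of \cite{BonWon} to the higher rank $\SL_d$ setting. The first step is a standard reduction: to show that $L^{[\widehat P_d^{(n,\bullet)}]}$ is central, it suffices to verify that it can be isotoped across any single weight-$k$ strand for $k \in \{1,\dots,d-1\}$, since $\SSS(S)$ is generated by framed webs that can be arranged to meet a tubular neighborhood of $L$ transversely. This localizes the problem to a ``transparency'' statement in the skein module of an annulus pierced by an arc: the element obtained by threading $\widehat P_d^{(n,i)}$ on the core circle should commute with any transverse weight-$k$ strand.

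The second step is to translate this transparency into an algebraic identity. The annular skein $\SSS(\mathrm{annulus})$ is a polynomial algebra whose natural generators can be identified with quantum analogs of the elementary symmetric functions $E_d^{(1)}, \dots, E_d^{(d-1)}$ in the eigenvalues of the quantum holonomy of the core. Under this identification, threading $\widehat P_d^{(n,i)}$ on the core precisely yields $\widehat P_d^{(n,i)}$ evaluated in these quantum generators. The defining identity $E_d^{(i)}(A^n) = \widehat P_d^{(n,i)}\big(E_d^{(1)}(A), \dots, E_d^{(d-1)}(A)\big)$ then suggests that the threaded element ought to represent, at the quantum level, the $i$-th elementary symmetric function of the $n$-th power of the holonomy, which is a class function on $\SL_d$ and therefore central.

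The main obstacle is to make this last suggestion precise at a root of unity. I expect the mechanism to be a quantum Frobenius phenomenon: the hypothesis $q^{\frac{2n}{d}} = 1$ is calibrated so that, when the threaded element is pushed through a transverse strand, the $n$-fold braiding eigenvalues become trivial on each isotypic summand (with the overall scalar absorbed by the determinant condition defining $\SL_d$), while the companion hypothesis $q^{2i} \neq 1$ for $2 \le i \le \frac d 2$ ensures that the quantum integers controlling the $\SL_d$-web relations remain invertible, so that the identification of threading with a class-function evaluation is non-degenerate. Concretely, this amounts to an identity in a Hecke algebra or in a category of $\SL_d$-webs at the prescribed root of unity, which I expect to follow from Newton-type relations between power sums and elementary symmetric functions combined with the vanishing/non-vanishing of appropriate quantum integers.

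Once the local transparency has been established for a single component, the general case of a framed link $L = L_1 \sqcup \dots \sqcup L_c$ follows by applying the statement to each component in turn, using the compatibility of threading with disjoint unions noted in \S\,\ref{sect:Thread}. The bulk of the work thus lies in the single-component transparency identity, and the essential difficulty is the explicit verification of the root-of-unity identity in the $\SL_d$-web calculus.
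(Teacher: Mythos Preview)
Your broad strategy---reduce centrality to a local transparency statement in an annulus, then verify an algebraic identity there that becomes valid at the prescribed root of unity---is exactly the paper's approach (Lemma~\ref{lem:TransparentGivesCentral} combined with Theorem~\ref{thm:PowerElementaryThreadingTransparent} and the computations of \S\ref{sect:AnnulusComputations}). But your plan has two concrete gaps in the execution.

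First, you propose to check transparency across a strand of arbitrary weight $k$, and you attribute the hypothesis $q^{2i}\neq 1$ to making ``the identification of threading with a class-function evaluation non-degenerate''. The paper does something different: it only proves transparency across a weight-$1$ strand (Proposition~\ref{prop:PowerElementaryTimesI}), and uses the hypothesis $q^{2i}\neq 1$ solely to \emph{reduce} the general weight-$k$ case to the weight-$1$ case via an ``explosion'' relation (Lemma~\ref{lem:Explosion}), which requires $\qfact{k}\neq 0$. Without this reduction you would have to carry out the annulus computation for every $k$, and the paper's method does not obviously extend to that.

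Second, the heart of the annulus computation is not a Newton-type relation or a generic quantum Frobenius argument. The paper applies the braiding relation to $L_j\bullet I$ and $I\bullet L_j$ (Lemma~\ref{lem:ITimesL_j}), obtaining expressions in auxiliary skeins $T, X_1,\dots,X_{d-2}$; the change of variables this produces is precisely the one governed by the purely combinatorial identity of Proposition~\ref{prop:PowerElementaryPolMainProp}, which expresses $\widehat P_d^{(n,i)}$ in terms of the $P_{d-1}^{(n,\cdot)}$. After applying that identity and the homogeneity of Lemma~\ref{lem:PowerElementaryPolHomogeneous}, the two sides differ only by replacing $q^{n/d}$ with $q^{-n/d}$, and the hypothesis $q^{2n/d}=1$ finishes the proof. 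Your plan does not locate this identity, and the mechanisms you name (Hecke-algebra identities, isotypic eigenvalue cancellation) are not the ones that actually do the work.
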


 Theorem~\ref{thm:PowerElementaryThreadingCentralIntro} is based on a more general property for skein modules $\SSS(M)$ of $3$--manifolds which, borrowing terminology from \cite{Le1}, is a certain transparency property for  threading operations along the reduced power  polynomial $\widehat P_d^{(n, i)}\in \Z[e_1, e_2, \dots, e_{d-1}]$. This property states that, if $L_0$ is a framed link in a 3--manifold $M$ carrying component weights in $\{ 1,2, \dots, d-1\}$ and if $L$ is a framed knot disjoint from $L_0$, then the skein $L_0 \sqcup L^{[P_d^{(n,i)}]} \in \SSS(M)$ obtained by threading   $\widehat P_d^{(n,i)} $ along $L$ is invariant under any isotopy of $L$ in $M$ that is allowed to cross $L_0$. 

\begin{thm}
\label{thm:PowerElementaryThreadingTransparentIntro}
 Suppose that the $d$--root $q^{\frac 1d}$ occurring in the definition of skein modules $\SSS(M)$ is such that $q^{\frac {2n}d}=1$, and that $q^{2i}\neq 1$ for every integer $i$ with $2\leq i \leq \frac d2$. Then, for every $i=1$, $2$, \dots, $d-1$, the threading operation along the reduced power elementary  polynomial  $\widehat P_d^{(n,i)} \in \Z[e_1, e_2, \dots, e_{d-1}]$ is transparent in the skein module $\SSS(M)$ of any oriented $3$--manifold $M$. 
\end{thm}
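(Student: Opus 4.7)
The plan is to reduce the global transparency statement in an arbitrary oriented 3-manifold $M$ to a local statement in the skein algebra of a thickened annulus, and then to establish that local statement by combining the symmetric-function description of the annular skein algebra with the root-of-unity hypotheses on $q$.

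First, I would argue that transparency is a local property. By a general-position argument, any isotopy of the framed knot $L$ that is allowed to cross $L_0$ decomposes into a finite sequence of Reidemeister-II-type moves, each supported in a standard ball $B \subset M$, where the only non-trivial moves interchange diagrams that differ by a crossing between a strand of $L$ (carrying the threading by $\widehat P_d^{(n,i)}$) and a strand of $L_0$ (carrying some weight $k \in \{1, \ldots, d-1\}$). Using the naturality of the skein module under embeddings, this reduces the theorem to proving that the element $\omega^{(n,i)} \in \SSS(\mathbb A \times [0,1])$ obtained by threading $\widehat P_d^{(n,i)}$ around the core circle of the annulus is transparent with respect to any weight-$k$ strand that pierces the annulus.

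Second, I would exploit the standard interpretation of the annular skein algebra as a quantum deformation of the algebra of $\GLd$-invariant functions on $\SLd$ and, equivalently, as a suitable quotient of the ring of symmetric functions, under which the threading of $e_j$ around the core corresponds to the $j$-th fundamental representation of $U_q(\mathfrak{sl}_d)$ in the sense of \cite{CKM}. Under this dictionary, threading $\widehat P_d^{(n,i)}$ corresponds, in the classical limit $q=1$, to the $i$-th elementary symmetric function of the $n$-th power of the monodromy, so the hypothesis $q^{\frac{2n}{d}}=1$ is the natural condition under which this $n$-th-power operation should yield an element of a quantum-Frobenius-type central subalgebra. The hypothesis $q^{2i}\neq 1$ for $2 \leq i \leq \frac{d}{2}$ is what ensures that the quantum integers appearing in the relevant Newton-Girard-type identities are non-zero, so that those identities can be inverted.

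The main obstacle, and the heart of the proof, is the explicit verification that $\omega^{(n,i)}$ commutes with any braiding through the annulus. I would first carry this out for a weight-$1$ strand, where one can work directly in a Hecke-algebra framework: using a quantum Newton-Girard identity to express $\widehat P_d^{(n,i)}$ in terms of power-sum-like combinations of braid elements, and then showing that at $q^{\frac{2n}{d}}=1$ the corresponding braiding operator acts by a common scalar on each isotypic summand of the $n$-fold tensor product of the defining representation. The extension to arbitrary weights $k$ should then follow from the braided-monoidal structure of the $\SLd$-web category, since a weight-$k$ strand is a direct summand of a suitable tensor product of weight-$1$ strands. In the $\SLtwo$ case, this collapses to the Chebyshev miracle of \cite{BonWon, Le1}; the delicate point for general $d$ will be to control, at the root of unity, the precise eigenvalues of the braiding on the various simple summands, which is where both hypotheses on $q$ enter simultaneously.
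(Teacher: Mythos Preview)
Your reduction to a local problem is correct and matches the paper's approach: one decomposes the isotopy into elementary crossings, each supported in a ball, and works in a neighbourhood modelled on a thickened annulus through which a single strand of the web passes. The paper formalises this via the module $\Aio$ of stated skeins in the annulus with one incoming and one outgoing weight-$1$ boundary point, and shows that the threaded skein commutes with the identity arc $I$ there (Proposition~\ref{prop:PowerElementaryTimesI}).

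However, the heart of your proposal---the ``explicit verification'' via a Hecke-algebra framework, quantum Newton--Girard identities, and an eigenvalue analysis of the braiding on isotypic summands---is not a proof but a programme, and one whose feasibility is unclear. The paper does something quite different and much more concrete: it writes $L_j\bullet I$ and $I\bullet L_j$ explicitly as polynomials in certain central elements $T$, $X_1,\dots,X_{d-2}$ of $(\Aio,\circ)$ via direct skein computations (Lemma~\ref{lem:ITimesL_j}), and then invokes a purely algebraic identity for the reduced power elementary polynomials (Proposition~\ref{prop:PowerElementaryPolMainProp}) to compare the two sides. The upshot is that $L^{[\widehat P_d^{(n,i)}]}\bullet I$ and $I\bullet L^{[\widehat P_d^{(n,i)}]}$ are given by the \emph{same} polynomial in $T,X_1,\dots,X_{d-2}$, except that every coefficient in one is a power of $q^{n/d}$ and in the other the corresponding power of $q^{-n/d}$; the hypothesis $q^{2n/d}=1$ then makes them equal. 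No eigenvalue analysis or Hecke-algebra machinery is needed.

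You have also misidentified the role of the hypothesis $q^{2i}\neq 1$ for $2\leq i\leq d/2$. It is \emph{not} used to invert any Newton--Girard-type identity; the annulus computation uses only $q^{2n/d}=1$. It enters solely through the reduction to a weight-$1$ strand: a weight-$k$ edge can be ``exploded'' into $k$ parallel weight-$1$ edges only when the quantum factorial $\qfact{k}$ is nonzero (Lemma~\ref{lem:Explosion}), and after using the stump relation to assume $k\leq d/2$ this is exactly the stated condition on $q$. Your own suggestion that ``a weight-$k$ strand is a direct summand of a suitable tensor product of weight-$1$ strands'' is where this hypothesis is genuinely needed, and the paper remarks (Remark~\ref{rem:ConditionsOnQ}) that it is probably an artefact of this reduction rather than an intrinsic obstruction.
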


As indicated in Remark~\ref{rem:ConditionsOnQ}, the hypothesis in Theorems~\ref{thm:PowerElementaryThreadingCentralIntro} and \ref{thm:PowerElementaryThreadingTransparentIntro} that $q^{2i}\neq 1$ for every integer $i$ with $2\leq i \leq \frac d2$ is probably unnecessary. 

Similar results for $\mathrm G_2$--skeins,  where $\mathrm G_2$ is the exceptional Lie group of rank 2, will appear in \cite{BBHHMP}.

  \section{$\SLd$--webs and skein relations}

 \subsection{The $\SLd$--skein module of a $3$--dimensional manifold}
 \label{sect:SkeinModule}  Throughout the article, $\SLd$ will denote the Lie group of $d$-by-$d$ matrices with determinant~1. Because the coefficient ring of this algebraic group is  irrelevant for our purposes, we will systematically omit it.

 We are here using the version of $\SLd$--skein modules that uses the webs developed by Cautis-Kamnitzer-Morrison in \cite{Mor, CKM}. There is another well-known alternative based on Kuperberg-Sikora spiders \cite{Kup, Sik, LeSik}. See \cite{Pou} for the equivalence between the two viewpoints.

 An \emph{$\SLd$--web} in an oriented  3-dimensional manifold $M$ is a graph $W$ embedded in $M$ endowed with additional data satisfying the following conditions: 
\begin{enumerate}
\item the graph $W$ is endowed with a \emph{ribbon structure} consisting of a thin oriented surface embedded in $M$ that contains $W$ and deformation retracts onto it;
 \item each edge of $W$ carries an orientation and a weight $i \in \{1,2, \dots, d-1\}$;
 \item each vertex of $W$ is of one of the following three types:
\begin{enumerate}
\item a vertex of type ``merge'' with two incoming edges of weights $i$ and $j$ and one outgoing edge of weight $i+j$, as in the first picture of Figure~\ref{fig:Vertices};
\item a  vertex of type ``split'' with one incoming edge of weight $i+j$ and two outgoing edges of weights $i$ and $j$, as in the second picture of Figure~\ref{fig:Vertices};
\item a vertex of type ``stump'' (also called ``tag'' in \cite{CKM}) adjacent to exactly one edge of $W$, which carries weight $d$, as in the last two pictures of Figure~\ref{fig:Vertices};
\end{enumerate}

\item the only edges that are allowed to carry weight $d$ are those adjacent to a stump;
\item $W$ can have components that are closed loops, with no vertices, but no component can be the graph with exactly one edge and two stumps.
\end{enumerate}

Along the components of $W$ that are closed loops, the ribbon structure is equivalent to the very classical notion of \emph{framing}, namely the data of a vector field that is everywhere transverse to the loop (or, equivalently, with a trivialization of the normal bundle of that loop). In particular, framed (oriented) links where each component is colored by a weight $i \in \{1,2, \dots, d-1\}$ are fundamental examples of webs.

 \begin{figure}[htbp]
  
\centerline{\SetLabels
\R( .2 * .3 )  $i$ \\
\L( .85 * .3)  $j$ \\
\L(  .6* .7 )  $i+j$ \\
(0.5 * -.4)  merge vertex \\
\endSetLabels
\AffixLabels{\includegraphics{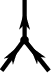}}
\hskip 50pt
\SetLabels
\R( .2 * .3 )  $i$ \\
\L( .85 * .3)  $j$ \\
\L(  .65* .7 )  $i+j$ \\
(0.5 * -.4)  split vertex \\
\endSetLabels
{\AffixLabels{ \includegraphics{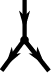}}}
\hskip 50pt
\SetLabels
\R( .3 * .25 )  $i$ \\
\L( .8 * .25)  $d-i$ \\
\L(  .6* .7 )  $d$ \\
(0.5 * -.3)  outward stump \\
\endSetLabels
{\AffixLabels{ \includegraphics{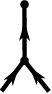}}}
\hskip 50pt
\SetLabels
\R( .3 * .25 )  $i$ \\
\L( .8 * .25)  $d-i$ \\
\L(  .65* .7 )  $d$ \\
(0.5 * -.3)  inward stump \\
\endSetLabels
{\AffixLabels{ \includegraphics{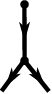}}}
}
 \vskip 20 pt
\caption{Vertices of a web}
\label{fig:Vertices}
\end{figure}

The \emph{$\SLd$--skein module} $\SSS(M)$ is  obtained from the vector space over $\C$ (say) freely generated by the set of isotopy classes of $\SLd$--webs under a set of \emph{skein relations} that are explicitly listed in \cite{CKM}. Since we will not need most of them, we are only listing a few in Figures~\ref{fig:Skein1}--\ref{fig:Braiding} and refer to \cite{CKM} for the full list.

\begin{figure}[htbp]

\begin{align*}
\SetLabels\small
\R\E( 0 * .1 ) $i$  \\
\L\E( 1 * .1 )  $j$ \\
(  .5* .2 ) $k$  \\
( .5 * .8 ) $l$  \\
\R\E(  0* .5 )  $i-k$ \\
\L\E( 1 * .5 ) $k+j$  \\
\R\E( 0 * .9 ) $i-k+l$  \\
\L\E( 1 * .9 )  $j+k-l$ \\
\endSetLabels
{\AffixLabels{\raisebox{-30pt}{\includegraphics{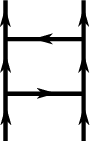}}}}
 \hskip 3em 
&= \sum_m \qbinom{i-j-k+l}{m}
\hskip 4em   
\SetLabels\small
\R\E( 0 * .1 ) $i$  \\
\L\E( 1 * .1 )  $j$ \\
(  .5* .2 ) $l-m$  \\
( .5 * .8 ) $k-m$  \\
\R\E(  0* .5 )  $i+l-m$ \\
\L\E( 1 * .5 ) $j+l-m$  \\
\R\E( 0 * .9 ) $i-k+l$  \\
\L\E( 1 * .9 )  $j+k-l$ \\
\endSetLabels
{\AffixLabels{\raisebox{-30pt}{\includegraphics{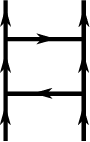}}}}
\end{align*}

\caption{A typical skein relation}
\label{fig:Skein1}
\end{figure}

\begin{figure}[htbp]

 \begin{align*}
 \SetLabels
\E\R( -.0 *  .5)  $i$ \\
\E\R( .85 * .5 )   $j$\\
\R( .4 * .8 )   $i+j$\\
\R(  .4* .1 )   $i+j$\\
\endSetLabels\small
{\AffixLabels{\raisebox{-30pt}{\includegraphics{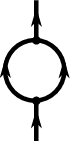}}}}
 &= \qbinom{i+j}{i}\   
\SetLabels
\L(  1* .8 )   $i+j$\\
\endSetLabels
{\AffixLabels{ \raisebox{-30pt}{\includegraphics{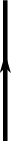}}}}
 \end{align*}

\caption{Another skein relation}
\label{fig:Skein2}
\end{figure}

\begin{figure}[htbp]
\begin{align*}
\raisebox{-15pt}{\SetLabels
\L(  1* .8 )  $i$ \\
\R( .8 * 0 )  $d-i$ \\
(  .5* .6 )  $d$ \\
\endSetLabels
{\AffixLabels{\includegraphics{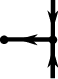}}}
}
&= 
(-1)^{i(d-i)}\kern 5pt
\raisebox{-15pt}{\SetLabels
\R(  0* .8 )  $i$ \\
\L( .2 * .1 )  $d-i$ \\
(  .5* .6 )  $d$ \\
\endSetLabels
{\AffixLabels{\includegraphics{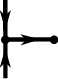}}}
}
&\raisebox{-25pt}{\SetLabels
\R(  .45* .0 )  $i$ \\
\L(  .55* 1 )  $i$ \\
\E\R( .45 * .5 )  $d-i$ \\
(  .25* .75 )  $d$ \\
(  .75* .4 )  $d$ \\
\endSetLabels
{\AffixLabels{\includegraphics{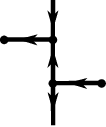}}}
}
&=
\raisebox{-25pt}{\SetLabels
\R(  0* .0 )  $i$ \\
\L(  1* 1 )  $i$ \\
\endSetLabels
{\AffixLabels{\includegraphics{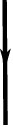}}}
}
\end{align*}

\caption{Two skein relations involving stumps}
\label{fig:Skein3}
\end{figure}

 \begin{figure}[htbp]
\begin{align*}
\SetLabels\small
\R\E( 0 * .1 ) $i$  \\
\L\E( 1 * .1 )  $j$ \\
\R\E( 0 * .9 ) $j$  \\
\L\E( 1 * .9 )  $i$ \\
\endSetLabels
{\AffixLabels{\raisebox{-30pt}{\includegraphics{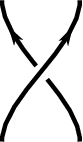}}}}
 \hskip 2em 
&= 
(-1)^{ij}q^{\frac{ij}d}
\sum_m (-q )^{-m}
\hskip 2em   
\SetLabels\small
\R\E( 0 * .1 ) $i$  \\
\L\E( 1 * .1 )  $j$ \\
(  .5* .2 ) $i-m$  \\
( .5 * .8 ) $j-m$  \\
\R\E(  0* .5 )  $m$ \\
\L\E( 1 * .5 ) $i+j-m$  \\
\R\E( 0 * .9 ) $j$  \\
\L\E( 1 * .9 )  $i$ \\
\endSetLabels
{\AffixLabels{\raisebox{-30pt}{\includegraphics{Skein6.eps}}}}
\\
\\
\SetLabels\small
\R\E( 0 * .1 ) $i$  \\
\L\E( 1 * .1 )  $j$ \\
\R\E( 0 * .9 ) $j$  \\
\L\E( 1 * .9 )  $i$ \\
\endSetLabels
{\AffixLabels{\raisebox{-30pt}{\includegraphics{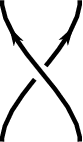}}}}
 \hskip 2em 
&= 
(-1)^{ij}q^{-\frac{ij}d}
\sum_m (-q )^{m}
\hskip 2em   
\SetLabels\small
\R\E( 0 * .1 ) $i$  \\
\L\E( 1 * .1 )  $j$ \\
(  .5* .2 ) $i-m$  \\
( .5 * .8 ) $j-m$  \\
\R\E(  0* .5 )  $m$ \\
\L\E( 1 * .5 ) $i+j-m$  \\
\R\E( 0 * .9 ) $j$  \\
\L\E( 1 * .9 )  $i$ \\
\endSetLabels
{\AffixLabels{\raisebox{-30pt}{\includegraphics{Skein6.eps}}}}
\end{align*}

\caption{Braiding relations}
\label{fig:Braiding}
\end{figure}

In these figures, each skein relation should be seen as occurring in a neighborhood of a disk embedded in $M$, in such a way that the ribbon structures of each web represented are horizontal for the projection to that disk. The sums are over indices $m\in \Z$, with the following conventions:
\begin{enumerate}
 \item the sum is limited to those values of $m$ that lead to edge weights in $\{0,1,\dots, d\}$;
 \item an edge carrying  weight $0$ and its end vertices should be erased;  
 \item an edge carrying weight $d$ should be split into two edges with stumps, with a convention that will be more precisely described when we need it in our proof of Lemma~\ref{lem:ITimesL_j}. 
 \end{enumerate}
 Also, the symbols $\qbinom ij$ represent the \emph{quantum binomials}
 $$
 \qbinom ij = \frac{\qint i \qint{i-1} \dots \qint{i-j+1}}{\qint j \qint{j-1}\dots  \qint 2 \qint 1} = \frac{\qfact i}{\qfact j \qfact{n-j}}
 $$
 with the \emph{quantum integers} 
 $$\qint i = \frac{q^i - q^{-i}}{q-q^{-1}}$$ and the \emph{quantum factorials} 
 $$\qfact i = \qint i \qint{i-1} \dots \qint2 \qint1. $$
 
We will not need the skein relation of Figure~\ref{fig:Skein1}, which is shown here only to give the flavor of typical skein relations. However, we will make use of the relations of Figures~\ref{fig:Skein2}--\ref{fig:Braiding}. 

Note that the \emph{braiding relations} of Figure~\ref{fig:Braiding}  require us to  fix a $d$--root $q^{\frac1d}$ of the quantum parameter $q \in \C-\{0\}$. As a consequence, the skein module $\SSS(M)$ depends on this choice of $q^{\frac1d}$ in spite of the fact that this is not reflected in the notation, which would otherwise be too cumbersome.

These skein relations originate from the representation theory of the quantum group $\mathrm U_q(\mathfrak{sl}_d)$. The skein relations other than the braiding relations of Figure~\ref{fig:Braiding} describe all the relations that occur between tensor products of the quantum exterior power representations $\bigwedge^i_q \C^d$ of $\mathrm U_q(\mathfrak{sl}_d)$. The braiding relations reflect the braiding of the representation category of  $\mathrm U_q(\mathfrak{sl}_d)$. See \cite{CKM} for details.

 \subsection{The $\SLd$--skein algebra of a surface}
 \label{sect:SkeinAlgebra}
 
 An important special case is provided by the thickening $M=S \times [0,1]$ of an oriented surface $S$. In this case the skein module $\SSS(S\times [0,1])$ admits a natural algebra structure where the multiplication is defined as follows. If $[W_1]$, $[W_2] \in \SSS(S)$ are respectively represented by webs $W_1$, $W_2$ in $S\times [0,1]$, the product $[W_1] \bullet [W_2] $ is represented by the web $W_1' \cup W_2'$ where $W_1'$ is obtained by rescaling $W_1$ inside $S\times [ 0,\frac12]$  and  $W_2'$ is obtained by rescaling $W_2$ inside $S\times [ \frac12, 1]$. In practice if, by projection to $S$, we represent each $W_i$ 
 by the picture of a possibly knotted graph in $S$, $[W_1] \bullet [W_2] $ is obtained by placing $W_2$ on top of $W_1$. 
 
 The algebra $\SSS(S\times [0,1])$, denoted as $\SSS(S)$ for short, is the \emph{$\SLd$--skein algebra} of the oriented surface $S$.

 \section{Threading a polynomial along a framed link}
 \label{sect:Thread}
 
 Let $L$ be an oriented framed knot in the 3--manifold $M$, namely a 1--dimensional oriented closed submanifold of $M$ that is endowed with a nonzero section of its normal bundle. This framing can also be used to define a ribbon structure along $L$.

Given a polynomial 
 $$
 P = \sum_{i_1, i_2, \dots, i_{d-1}=0}^{i_{\mathrm{max}}} a_{i_1i_2 \dots i_{d-1}} e_1^{i_1}e_2^{i_2} \dots e_{d-1}^{i_{d-1}}  \in \Z[e_1, e_2, \dots, e_{d-1}]
 $$
in $(d-1)$ variables $e_1$, $e_2$, \dots, $e_{d-1}$ with coefficients $a_{i_1i_2 \dots i_{d-1}}  \in \Z$, the \emph{skein obtained by threading $P$ along $L$} is defined as the linear combination
 $$
 L^{[P]} =  \sum_{i_1, i_2, \dots, i_{d-1}=0}^{i_{\mathrm{max}}} a_{i_1i_2 \dots i_{d-1}}  L^{[ e_1^{i_1}e_2^{i_2} \dots e_{d-1}^{i_{d-1}}]} \in \SSS(M),
 $$
 where
 $ L^{[ e_1^{i_1}e_2^{i_2} \dots e_{d-1}^{i_{d-1}}]} \in \SSS(M)$ is represented by the union of $i_1+i_2+\dots i_{d-1}$ disjoint parallel copies of the knot $L$, taken in the direction of the framing, and with $i_1$ of these copies carrying the weight $1$, $i_2$ carrying the weight $2$, \dots, and $i_{d-1}$ carrying the weight $d-1$. In particular, $L^{[ e_1^{0}e_2^{0} \dots e_{d-1}^{0}]} $ is represented by the empty link. 
 
 More generally, if $L$ is an oriented framed link with components $L_1$, $L_2$, \dots $L_c$, the \emph{skein obtained by threading the polynomials $P_j$ along the components $L_j$ of $L$} is defined as the disjoint union
 $$
 L^{[P_1, P_2, \dots, P_c]} = L_1^{[P_1]} \sqcup  L_2^{[P_2]} \sqcup \dots  \sqcup L_c^{[P_c]} 
 $$
 where the parallel copies used to define each $ L_j^{[P_j]} $ are chosen in disjoint tubular neighborhoods of the $L_j$. Note that, because each $ L_j^{[P_j]} $ is represented by a linear combination of webs, the disjoint union $ L^{[P_1, P_2, \dots, P_c]}  \in \SSS(M)$ is also defined by a linear combination of disjoint union of those webs. 
 
 Threading a polynomial $P \in \Z[e_1, e_2, \dots, e_{d-1}]$ is \emph{transparent} if, for every oriented framed knot $L$ in a 3--manifold $M$ and every web $W\subset M$ that is disjoint from $L$, the element of $\SSS(M)$ that is represented by $L^{[P]} \sqcup W$ is invariant under any isotopy of the knot $L$ that allows it to cross $W$.

\begin{lem}
\label{lem:TransparentGivesCentral}
 If threading a polynomial $P \in \Z[e_1, e_2, \dots, e_{d-1}]$ is transparent then, for every surface $S$ and every oriented framed link $L\subset S \times [0,1]$, the skein $L^{[P]}$ obtained by threading $P$ along each component of $L$ is central in the skein algebra $\SSS(S)$. 
\end{lem}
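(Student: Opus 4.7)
By linearity of the algebra product in $\SSS(S)$, and the fact that $\SSS(S)$ is spanned by classes of webs, it suffices to show $L^{[P]}\bullet[W]=[W]\bullet L^{[P]}$ when $[W]$ is represented by a single web $W\subset S\times[0,1]$. My plan is to compare concrete representatives: for $L^{[P]}\bullet[W]$, rescale $L$ (with its framing) into the bottom slab $S\times[0,\tfrac13]$, so that its threaded parallel copies all live in a tubular neighborhood of $L$ inside this slab, and rescale $W$ into the top slab $S\times[\tfrac23,1]$.

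The transformation into a representative of $[W]\bullet L^{[P]}$ would proceed in two stages. In the first stage I isotope the link $L$, carrying its framing and hence the tubular neighborhood and threaded parallel copies, from the bottom slab to the top slab, component by component. Each single-component isotopy necessarily crosses $W$ (and possibly the threaded parallel copies of the other components), but the transparency hypothesis ensures that the skein class of $L^{[P]}\sqcup W$ is unchanged after the completed isotopy of that component. Iterating over all components $L_1, L_2, \dots, L_c$, I obtain a configuration in which $L^{[P]}$ sits in the top slab while $W$ is still in its original position; at the endpoint of the isotopy these two are disjoint, so no conflict arises. In the second stage I slide $W$ downward from the top slab into the bottom slab through the now-empty region $S\times[\tfrac13,\tfrac23]$ by an ordinary ambient isotopy disjoint from $L^{[P]}$; this preserves the skein class. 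The final configuration represents $[W]\bullet L^{[P]}$, which proves the identity.

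The main conceptual point to verify is that transparency, stated for a single framed knot $L$ and a single ambient web $W$, extends to the multi-component situation, where the isotopy of $L_j$ must cross not only $W$ but also the threaded parallel copies of the other components $L_k$, $k\neq j$ (which is unavoidable when the link $L$ has nontrivial linking). This extension follows by linearity: writing $L^{[P]}\sqcup W$ as a linear combination $\sum_{\vec\beta} c_{\vec\beta}\,(L_{1,\beta_1}\sqcup\cdots\sqcup L_{c,\beta_c}\sqcup W)$, where each $L_{j,\beta_j}$ is a specific union of weighted parallel copies of $L_j$, one applies the transparency hypothesis to each summand individually with the ambient web taken to be $W\sqcup\bigsqcup_{k\neq j}L_{k,\beta_k}$, and then reassembles the sum. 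Tracking this extension, together with the bookkeeping of the component-by-component isotopies, is the only non-routine aspect of the argument; granting it, the proof is complete.
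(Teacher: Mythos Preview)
Your argument is correct and follows essentially the same approach as the paper's proof: represent the two products by placing $L$ and $W$ in different slabs, then use transparency to slide $L$ across $W$. The paper arranges this slightly more economically by putting $W$ in the middle slab $S\times[\tfrac13,\tfrac23]$ from the start, so that only the single stage of moving $L$ from the bottom slab to the top slab is needed; your second stage (sliding $W$ down afterward) is harmless but unnecessary. You are also more careful than the paper about the multi-component case, correctly noting that when moving one component $L_j$ the relevant ``ambient web'' includes the threaded copies of the other components, and that this is handled by expanding in the $\beta_k$ for $k\neq j$ and re-summing over $\beta_j$ to recover $L_j^{[P]}$ before invoking transparency.
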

\begin{proof} If $[W] \in \SSS(S)$ is represented by a web $W \subset S \times [\frac13, \frac23]$, then $ [L^{[P]}] \bullet [W] $ is represented by $L_1^{[P]} \sqcup W$ where $L_1$ is obtained by rescaling $L$ inside $S\times [0, \frac13]$, while $[W] \bullet  [L^{[P]}]  $ is represented by $L_2^{[P]} \sqcup W$ with $L_2$  obtained by rescaling $L$ inside $S\times [\frac23, 1]$. Applying the transparency property to an isotopy moving $L_1$ to $L_2$ shows that  $ [L^{[P]}] \bullet [W] = [W] \bullet  [L^{[P]}]  $. 
\end{proof}

 \section{Power elementary  polynomials}
 \label{sect:PowerPols}
 
 In the ring  $\Z[\lambda_1, \lambda_2, \dots, \lambda_d]$ of polynomials with integer coefficients in $d$ variables $\lambda_1$, $\lambda_2$, \dots, $\lambda_d$, recall that a polynomial is \emph{symmetric} if it is invariant under all permutations of the variables $\lambda_1$, $\lambda_2$, \dots, $\lambda_d$. 
 Fundamental examples include the \emph{elementary symmetric polynomials}
 $$
 E_d^{(i)}= \sum_{1\leq j_1<j_2<\dots<j_i\leq d} \lambda_{j_1} \lambda_{j_2} \dots  \lambda_{j_i},
 $$ 
 defined for $1\leq i \leq d$. 
 
  There is a well-known connection between the elementary symmetric polynomials $E_d^{(i)}$ and the Lie group $\GLd$. Namely, if $A\in \GLd(\mathbb K)$ is a matrix with coefficients in the field $\mathbb K$, with eigenvalues $\lambda_1$, $\lambda_2$, \dots, $\lambda_d$ in the algebraic closure of $\mathbb K$, the coefficient of the term of degree $d-i$ in the characteristic polynomial of $A$ is equal to $(-1)^i E_d^{(i)}$. In this situation, we will also write 
  $$
  E_d^{(i)}(A) = E_d^{(i)}(\lambda_1, \lambda_2, \dots, \lambda_d)\in \mathbb K.
  $$
  
  If we are interested in the characteristic polynomial of a power $A^n$, whose eigenvalues are $\lambda_1^n$, $\lambda_2^n$, \dots, $\lambda_d^n$, it makes sense to consider, for $n\geq 1$ and $1\leq k \leq d$,  the \emph{power  elementary symmetric  polynomials}
 $$
 E_d^{(n,i)} = \sum_{1\leq j_1<j_2<\dots<j_i \leq d} \lambda_{j_1}^n \lambda_{j_2}^n \dots  \lambda_{j_i}^n 
 $$
 obtained from $E_d^{(i)}$ by replacing each occurrence of the variable $\lambda_j$ with its power $\lambda_j^n$.
 For instance, the case  $n=1$ gives the original elementary symmetric polynomial $E_d^{(1,i)}=E_d^{(i)}$, while the case $i=1$ corresponds to the well-known family of \emph{power sum polynomials} $E_d^{(n,1)} = \sum_{i=1}^d \lambda_i^n$.

\begin{lem}
\label{lem:PowerElementaryPolExists}
 There exists a unique polynomial $P_d^{(n,i)} \in \Z[e_1, e_2, \dots, e_d]$ such that $E_d^{(n,i)} \in \Z[\lambda_1, \lambda_2, \dots, \lambda_d]$ is obtained from $P_d^{(n,i)}$ by replacing each variable $e_j$ with the elementary symmetric polynomial $E_d^{(j)} \in  \Z[\lambda_1, \lambda_2, \dots, \lambda_d]$. 
\end{lem}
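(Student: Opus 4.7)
The plan is to reduce the statement to the fundamental theorem of symmetric polynomials. First I observe that $E_d^{(n,i)}$ is symmetric in $\lambda_1, \lambda_2, \dots, \lambda_d$: it is obtained from $E_d^{(i)}$ by applying the ring endomorphism of $\Z[\lambda_1, \dots, \lambda_d]$ that sends each $\lambda_j$ to $\lambda_j^n$, and this substitution commutes with every permutation of the variables. Consequently $E_d^{(n,i)} \in \Z[\lambda_1, \dots, \lambda_d]^{S_d}$.

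Next I invoke the fundamental theorem of symmetric polynomials, which asserts that the ring homomorphism
$$
\Phi \colon \Z[e_1, e_2, \dots, e_d] \longrightarrow \Z[\lambda_1, \lambda_2, \dots, \lambda_d]^{S_d}, \qquad e_j \longmapsto E_d^{(j)},
$$
is an isomorphism onto the subring of symmetric polynomials. The existence of a polynomial $P_d^{(n,i)}$ with $\Phi(P_d^{(n,i)}) = E_d^{(n,i)}$ then follows from surjectivity of $\Phi$, and uniqueness from injectivity, namely the algebraic independence of $E_d^{(1)}, \dots, E_d^{(d)}$ over $\Z$.

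The only thing worth verifying is that the fundamental theorem can be applied with integer coefficients. For this I would point to the standard constructive proof: iteratively subtract a monomial of the form $c\, E_d^{(1)^{a_1}} E_d^{(2)^{a_2}} \cdots E_d^{(d)^{a_d}}$ to cancel the leading term of the current symmetric polynomial in lexicographic order, where the exponents $a_j$ are determined by the leading multidegree and the scalar $c \in \Z$ equals the leading coefficient. Since each step only subtracts a $\Z$-linear combination of products of $E_d^{(j)}$'s, the resulting expression lies in $\Z[e_1, \dots, e_d]$. The algorithm terminates because the leading multidegree strictly decreases in the lex order, which is a well-order on monomials of bounded total degree.

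There is no real obstacle in this lemma; it is a foundational statement recorded here so that the \emph{reduced} power elementary polynomials $\widehat P_d^{(n,i)} \in \Z[e_1, \dots, e_{d-1}]$ of the introduction can subsequently be defined by specializing $e_d = 1$, reflecting the constraint $E_d^{(d)}(A) = \det A = 1$ for $A \in \SLd$.
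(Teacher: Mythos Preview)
Your argument is correct and follows exactly the same route as the paper: both invoke the fundamental theorem of symmetric polynomials (the isomorphism $\Z[e_1,\dots,e_d]\cong \Z[\lambda_1,\dots,\lambda_d]^{S_d}$), with the paper simply citing Macdonald and calling it ``an immediate consequence'' while you additionally spell out why $E_d^{(n,i)}$ is symmetric and why the theorem holds over~$\Z$.
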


\begin{proof}
 This is an immediate consequence of the very classical property that the subring of symmetric polynomials in $\Z[\lambda_1, \lambda_2, \dots, \lambda_d]$  is itself isomorphic to the polynomial ring $\Z[e_1, e_2, \dots, e_d]$, by an isomorphism sending each elementary symmetric polynomial $E_d^{(i)}$ to the variable $e_i$. See for instance \cite[\S I.2]{McD}. 
\end{proof}

We call these $P_d^{(n,i)} \in \Z[e_1, e_2, \dots, e_d]$ the \emph{power elementary polynomials}, not to be confused with the closely connected but formally different power elementary \emph{symmetric} polynomials $E_d^{(n,i)} \in \Z[\lambda_1, \lambda_2, \dots, \lambda_d]$, which involve different variables. 

Simple considerations show that $P_d^{(1,i)} = e_i$ when $n=1$, and $P_d^{(n,d)}= e_d^n$ when $i=d$. More generally, the isomorphism between the ring of symmetric polynomials and the polynomial ring in the elementary symmetric polynomials is fairly algorithmic (see for instance Property (2.3) of \cite[\S I.2]{McD}) and the power elementary polynomials $P_d^{(n,i)} \in \Z[e_1, e_2, \dots, e_d]$  can be explicitly computed, although their sizes quickly grow with $d$ and $n$ and eventually require the use of mathematical software. For instance, when $d=4$ and $n=6$,
\begin{align*}
P_4^{(6,1)}&=   e_1^6-6 e_1^4 e_2+9 e_1^2 e_2^2-2 e_2^3+6 e_1^3 e_3-12 e_1 e_2 e_3+3 e_3^2-6 e_1^2 e_4+6 e_2 e_4
\\
P_4^{(6,2)}&=  e_2^6-6 e_1 e_2^4 e_3+9 e_1^2 e_2^2 e_3^2+6 e_2^3 e_3^2-2 e_1^3 e_3^3-12 e_1 e_2 e_3^3+3 e_3^4+6 e_1^2 e_2^3 e_4-6 e_2^4 e_4
\\
&\qquad\qquad\qquad\qquad\qquad
-12 e_1^3 e_2 e_3 e_4+18 e_1^2 e_3^2 e_4+3 e_1^4 e_4^2+9 e_2^2 e_4^2-18 e_1 e_3 e_4^2+2 e_4^3
\\
P_4^{(6,3)}&=   e_3^6-6 e_2 e_3^4 e_4+9 e_2^2 e_3^2 e_4^2+6 e_1 e_3^3 e_4^2-2 e_2^3 e_4^3-12 e_1 e_2 e_3 e_4^3-6 e_3^2 e_4^3+3 e_1^2 e_4^4+6 e_2 e_4^4
\\
P_4^{(6,4)}&=   e_4^6.
\end{align*}

We are interested in the Lie group $\SLd$ rather than $\GLd$. For a matrix $A\in \SLd(\mathbb K)$  with eigenvalues $\lambda_1$, $\lambda_2$, \dots, $\lambda_d$ in the algebraic closure of the field $\mathbb K$, we have that
$$
E_d^{(d)} (\lambda_1, \lambda_2, \dots, \lambda_d) = \lambda_1 \lambda_2 \dots \lambda_d = \det A =1.
$$
It is therefore natural to specialize the polynomial $P_d^{(n,i)} \in \Z[e_1, e_2, \dots, e_d]$ by setting $e_d=1$, and to consider the \emph{reduced power elementary   polynomial} $\widehat P_d^{(n,i)} \in \Z[e_1, e_2, \dots, e_{d-1}]$ defined by
$$
\widehat P_d^{(n,i)} (e_1, e_2, \dots, e_{d-1}) = P_d^{(n,i)} (e_1, e_2, \dots, e_{d-1}, 1). 
$$

\begin{lem}
 The power elementary   polynomial $P_d^{(n,i)}$ is the unique polynomial  in $ \Z[e_1, e_2, \dots, e_d]$ such that
 $$
 E_d^{(i)}(A^n) =  P_d^{(n,i)} \left( E_d^{(1)}(A), E_d^{(2)}(A), \dots, E_{(d)}^d(A) \right)
 $$
 for every $A\in \GLd$.
 
 The reduced power elementary   polynomial $\widehat P_d^{(n,i)}$ is the unique polynomial  $\Z[e_1, e_2, \dots, e_{d-1}]$ such that
 $$
 E_d^{(i)}(A^n) = \widehat P_d^{(n,i)} \left( E_d^{(1)}(A), E_d^{(2)}(A), \dots, E_d^{(d-1)}(A) \right)
 $$
 for every $A\in \SLd$.
\end{lem}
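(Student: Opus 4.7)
The plan is to derive both statements from Lemma~\ref{lem:PowerElementaryPolExists} together with the fundamental theorem of symmetric polynomials. Existence is almost immediate from the previous lemma; the work is in uniqueness.

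First I would establish existence in the $\GLd$ statement. If $A\in\GLd(\mathbb K)$ has eigenvalues $\lambda_1,\dots,\lambda_d$ in the algebraic closure of $\mathbb K$, then $A^n$ has eigenvalues $\lambda_1^n,\dots,\lambda_d^n$, so
$$
E_d^{(i)}(A^n) = E_d^{(i)}(\lambda_1^n,\dots,\lambda_d^n) = E_d^{(n,i)}(\lambda_1,\dots,\lambda_d),
$$
and Lemma~\ref{lem:PowerElementaryPolExists} rewrites the right-hand side as $P_d^{(n,i)}(E_d^{(1)}(A),\dots,E_d^{(d)}(A))$, giving the desired identity.

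For uniqueness in the $\GLd$ statement, suppose $P,Q \in \Z[e_1,\dots,e_d]$ both satisfy the asserted identity. Specializing to diagonal matrices $A = \mathrm{diag}(\lambda_1,\dots,\lambda_d)$ with $\lambda_j$ arbitrary nonzero complex numbers shows that
$$
(P-Q)\bigl(E_d^{(1)}(\lambda_1,\dots,\lambda_d),\dots,E_d^{(d)}(\lambda_1,\dots,\lambda_d)\bigr) = 0
$$
on the Zariski-dense set $(\C^*)^d$, hence identically as a polynomial in $\lambda_1,\dots,\lambda_d$. The isomorphism $\Z[e_1,\dots,e_d]\to\Z[\lambda_1,\dots,\lambda_d]^{S_d}$ sending $e_i\mapsto E_d^{(i)}$, which was invoked in the proof of Lemma~\ref{lem:PowerElementaryPolExists}, then forces $P-Q = 0$.

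For the $\SLd$ statement, existence follows by specializing $e_d = 1$ in the $\GLd$ identity, since $E_d^{(d)}(A) = \det A = 1$ for $A\in\SLd$. For uniqueness, I would show that the map $\SLd(\C) \to \C^{d-1}$ sending $A$ to $(E_d^{(1)}(A),\dots,E_d^{(d-1)}(A))$ is surjective: given $(e_1,\dots,e_{d-1})\in\C^{d-1}$, let $\lambda_1,\dots,\lambda_d$ be the negatives of the roots of the polynomial $t^d + e_1 t^{d-1} + \dots + e_{d-1} t + 1$. Vieta's formulas give $\prod_i\lambda_i = 1$, so $\mathrm{diag}(\lambda_1,\dots,\lambda_d)\in\SLd$ and the identity $\det(A+tI_d) = \prod(t+\lambda_i)$ shows that it realizes the prescribed values. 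A polynomial in $\Z[e_1,\dots,e_{d-1}]\subset\C[e_1,\dots,e_{d-1}]$ vanishing on all of $\C^{d-1}$ must be zero.

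The argument is essentially bookkeeping and I do not expect a genuine obstacle. The one point to keep straight is that uniqueness really does require the surjectivity (or at least Zariski-density) of the characteristic map; for $\SLd$ this is exactly the content of the constant-term-one normalization of the characteristic polynomial, which is what makes every tuple $(e_1,\dots,e_{d-1})$ achievable by a diagonal matrix in $\SLd$.
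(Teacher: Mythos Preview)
Your proof is correct and follows essentially the same approach as the paper: existence via the eigenvalue computation and Lemma~\ref{lem:PowerElementaryPolExists}, uniqueness via the algebraic independence of the elementary symmetric polynomials (which you access through diagonal matrices). Your surjectivity argument for the $\SLd$ case is a more explicit version of what the paper compresses into a single appeal to algebraic independence; both amount to the same thing.
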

\begin{proof} If a matrix $A\in \GLd$ has eigenvalues $\lambda_1$, $\lambda_2$, \dots, $\lambda_d$, its $n$--th power $A^n$ has eigenvalues $\lambda_1^n$, $\lambda_2^n$, \dots, $\lambda_d^n$. 
 The fact that $P_d^{(n,i)}$ and  $\widehat P_d^{(n,i)}$  satisfy the relations indicated  then follows from their definitions, noting that $E_d^{(d)}(A)=1$ for every $A\in \SLd$. The uniqueness property immediately follows from the fact that the polynomials $E_d^{(1)}$, $E_d^{(2)}$, \dots, $E_d^{(d)}$ are algebraically independent in $\Z[\lambda_1, \lambda_2, \dots, \lambda_d]$ (see \cite[\S I.2]{McD}). 
\end{proof}

For future reference, we note the following elementary homogeneity property of the power elementary   polynomials $P_d^{(n,ki)} \in \Z[e_1, e_2, \dots, e_d]$.

\begin{lem}
\label{lem:PowerElementaryPolHomogeneous}
 For every scalar $\theta \in \C$, 
 $$
 P_d^{(n,i)} (\theta e_1, \theta^2 e_2, \dots, \theta^d e_d) =  \theta^{ni} P_d^{(n,i)} ( e_1,  e_2, \dots,  e_d).
 $$
\end{lem}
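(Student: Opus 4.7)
The plan is to prove the identity by exploiting the defining property of $P_d^{(n,i)}$ together with algebraic independence of the elementary symmetric polynomials, essentially tracking what happens to both sides when the underlying eigenvariables $\lambda_j$ are rescaled by $\theta$.

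First, I would substitute $e_j = E_d^{(j)}(\lambda_1, \lambda_2, \dots, \lambda_d)$ for each $j = 1, 2, \dots, d$ into both sides of the claimed identity. On the right-hand side, this substitution produces $\theta^{ni} E_d^{(n,i)}(\lambda_1, \lambda_2, \dots, \lambda_d)$ by Lemma~\ref{lem:PowerElementaryPolExists}. On the left-hand side, the argument $\theta^j e_j$ becomes $\theta^j E_d^{(j)}(\lambda_1, \dots, \lambda_d)$. The key observation is the standard homogeneity
\[
E_d^{(j)}(\theta\lambda_1, \theta\lambda_2, \dots, \theta\lambda_d) = \theta^j E_d^{(j)}(\lambda_1, \lambda_2, \dots, \lambda_d),
\]
which follows immediately from the fact that each monomial in $E_d^{(j)}$ is a product of exactly $j$ of the $\lambda$'s. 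Hence the substitution on the left converts the arguments into $E_d^{(j)}(\theta\lambda_1, \dots, \theta\lambda_d)$, and applying the defining property of $P_d^{(n,i)}$ again yields $E_d^{(n,i)}(\theta\lambda_1, \dots, \theta\lambda_d)$.

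Next I would observe that $E_d^{(n,i)}$ is itself homogeneous of degree $ni$ in the $\lambda$'s, since each of its monomials has the form $\lambda_{j_1}^n \lambda_{j_2}^n \dots \lambda_{j_i}^n$. Therefore
\[
E_d^{(n,i)}(\theta\lambda_1, \dots, \theta\lambda_d) = \theta^{ni} E_d^{(n,i)}(\lambda_1, \dots, \lambda_d),
\]
which matches the right-hand side. This shows that the two polynomials
\[
P_d^{(n,i)}(\theta e_1, \theta^2 e_2, \dots, \theta^d e_d) \quad \text{and} \quad \theta^{ni} P_d^{(n,i)}(e_1, e_2, \dots, e_d)
\]
in $\Z[\theta][e_1, \dots, e_d]$ agree after the specialization $e_j \mapsto E_d^{(j)}(\lambda_1, \dots, \lambda_d)$.

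Finally, since the elementary symmetric polynomials $E_d^{(1)}, E_d^{(2)}, \dots, E_d^{(d)}$ are algebraically independent in $\Z[\lambda_1, \dots, \lambda_d]$ (as cited earlier from \cite[\S I.2]{McD}), any polynomial relation in the $e_j$ that becomes zero after this substitution was already zero to begin with. Applying this to the difference of the two sides concludes the proof. There is no real obstacle here: the argument is a straightforward unpacking of definitions, with the only substantive ingredients being the standard homogeneity of $E_d^{(j)}$ and $E_d^{(n,i)}$ and the algebraic independence of the elementary symmetric polynomials.
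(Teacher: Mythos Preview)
Your proof is correct and follows essentially the same approach as the paper: the paper's proof simply states that the identity is an immediate consequence of $E_d^{(j)}$ being homogeneous of degree $j$ and $E_d^{(n,i)}$ being homogeneous of degree $ni$, while you have spelled out explicitly the substitution $e_j \mapsto E_d^{(j)}$ and the appeal to algebraic independence that make this work.
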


\begin{proof}
 This is an immediate consequence of the property that each elementary symmetric polynomial $E_d^{(i)}\in \Z[\lambda_1, \lambda_2, \dots, \lambda_d]$ is homogeneous of degree $i$, while the power elementary symmetric polynomial $E_d^{(n,i)} $ is homogeneous of degree $ni$. 
\end{proof}

The following result is much less natural, but it will play an essential role in the proof of the main result of this article.

\begin{prop}
\label{prop:PowerElementaryPolMainProp}
Given commuting variables $x_1$, $x_2$, \dots, $x_{d-1}$ with $x_{d-1}$ invertible,  define
$$
y_j =
\begin{cases}
x_{d-1}^{-1}  + x_1 &\text{if } j=1
 \\
 x_{j-1}x_{d-1}^{-1} + x_j &\text{if } 2\leq j \leq d-1.
\end{cases}
$$
Then, for every $n$ and every $i$ with $1\leq i\leq d-1$, we have the following equality
$$
\widehat P_d^{(n,i)}(y_1, y_2, \dots, y_{d-1}) = x_{d-1}^{-n}P_{d-1}^{(n, i-1)} (x_1, x_2, \dots, x_{d-1}) +  P_{d-1}^{(n, i)} (x_1, x_2, \dots, x_{d-1})
$$
 of Laurent polynomials in $\Z[x_1, x_2, \dots, x_{d-2}, x_{d-1}^{\pm1}]$. 
\end{prop}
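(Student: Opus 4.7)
The plan is to realize both sides of the identity as evaluations of the defining symmetric polynomial identities for $P_d^{(n,i)}$ and $P_{d-1}^{(n,i)}$ at a cleverly chosen $d$-tuple whose product equals $1$. First I introduce auxiliary variables $\mu_1, \mu_2, \dots, \mu_{d-1}$ and specialize
$$x_j = E_{d-1}^{(j)}(\mu_1, \mu_2, \dots, \mu_{d-1}) \quad \text{for } j=1, 2, \dots, d-1.$$
Since the elementary symmetric polynomials $E_{d-1}^{(1)}, \dots, E_{d-1}^{(d-1)}$ are algebraically independent in $\Z[\mu_1, \dots, \mu_{d-1}]$ and $x_{d-1}$ maps to the non-zero-divisor $\mu_1\mu_2\cdots\mu_{d-1}$, this specialization induces an injective ring homomorphism $\Z[x_1, \dots, x_{d-2}, x_{d-1}^{\pm 1}] \hookrightarrow \Z[\mu_1, \dots, \mu_{d-1}, (\mu_1\cdots\mu_{d-1})^{-1}]$. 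It therefore suffices to verify the claimed identity after this substitution.

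Next I set $\lambda_0 = (\mu_1\mu_2\cdots\mu_{d-1})^{-1}$, so that the $d$-tuple $(\lambda_0, \mu_1, \dots, \mu_{d-1})$ has product $1$. The classical one-variable recursion for elementary symmetric polynomials,
$$E_d^{(j)}(\lambda_0, \mu_1, \dots, \mu_{d-1}) = \lambda_0 \, E_{d-1}^{(j-1)}(\mu_1, \dots, \mu_{d-1}) + E_{d-1}^{(j)}(\mu_1, \dots, \mu_{d-1}),$$
with the conventions $E_{d-1}^{(0)}=1$ and $E_{d-1}^{(d)}=0$, immediately identifies $y_j$ with $E_d^{(j)}(\lambda_0, \mu_1, \dots, \mu_{d-1})$ for $1 \leq j \leq d-1$. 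The product condition gives $E_d^{(d)}(\lambda_0, \mu_1, \dots, \mu_{d-1}) = 1$, so applying the defining identity of $\widehat P_d^{(n,i)}$ (obtained from that of $P_d^{(n,i)}$ by setting $e_d=1$) yields
$$\widehat P_d^{(n,i)}(y_1, \dots, y_{d-1}) = E_d^{(n,i)}(\lambda_0, \mu_1, \dots, \mu_{d-1}) = E_d^{(i)}(\lambda_0^n, \mu_1^n, \dots, \mu_{d-1}^n).$$

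To finish, I apply the same recursion once more, now to the $n$-th powers:
$$E_d^{(i)}(\lambda_0^n, \mu_1^n, \dots, \mu_{d-1}^n) = \lambda_0^n \, E_{d-1}^{(n, i-1)}(\mu_1, \dots, \mu_{d-1}) + E_{d-1}^{(n, i)}(\mu_1, \dots, \mu_{d-1}),$$
and invoke the defining identity for $P_{d-1}^{(n, i-1)}$ and $P_{d-1}^{(n, i)}$ together with $\lambda_0^n = x_{d-1}^{-n}$; this converts the right-hand side into exactly $x_{d-1}^{-n}P_{d-1}^{(n, i-1)}(x_1, \dots, x_{d-1}) + P_{d-1}^{(n, i)}(x_1, \dots, x_{d-1})$, as desired.

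The proof involves no actual computation beyond the elementary symmetric function recursion; the mild technical points to watch are the boundary cases ($i=1$ requires the convention $P_{d-1}^{(n,0)}=1$, and $i=d-1$ requires noting that the recursion still applies), and the injectivity argument used to upgrade the identity from a polynomial identity in $\mu_1, \dots, \mu_{d-1}$ to a Laurent polynomial identity in the $x_j$. Neither is a serious obstacle; the main conceptual step is simply recognizing that the formulas defining $y_j$ are precisely the $d$-to-$(d-1)$ restriction of elementary symmetric polynomials on a $d$-tuple of determinant $1$.
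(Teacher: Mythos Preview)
Your proof is correct and follows essentially the same approach as the paper's: both arguments reduce the identity to a computation in the ring of symmetric functions by sending $x_j$ to $E_{d-1}^{(j)}$ in auxiliary variables, adjoin an extra variable $\lambda_0=(\mu_1\cdots\mu_{d-1})^{-1}$ (the paper uses $\lambda_d$) to force the product-one condition, recognize the $y_j$ as $E_d^{(j)}$ via the standard one-variable recursion, and then unwind the definitions of $\widehat P_d^{(n,i)}$ and $P_{d-1}^{(n,\cdot)}$. The only cosmetic difference is that the paper phrases the target as the quotient $\Z[\lambda_1,\dots,\lambda_d]/(\lambda_1\cdots\lambda_d-1)$ rather than the isomorphic localization $\Z[\mu_1,\dots,\mu_{d-1},(\mu_1\cdots\mu_{d-1})^{-1}]$.
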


\begin{proof} The proof should make the statement less mysterious. 
 Consider the ring homomorphism
 $$
\phi \colon  \Z[x_1, x_2, \dots, x_{d-2}, x_{d-1}^{\pm1}] \to \Z [\lambda_1, \lambda_2, \dots, \lambda_d]/(\lambda_1\lambda_2 \dots \lambda_d=1)
 $$
 sending each $x_j$ to the elementary symmetric polynomial 
 $E_{d-1}^{(j)} \in \Z[\lambda_1, \lambda_2, \dots, \lambda_{d-1}]$ in the first $d-1$ variables, 
 and sending $x_{d-1}^{-1}$ to $\lambda_d$. Note that $\phi$ is well-defined since, in the target space, 
 $$\phi(x_{d-1}^{-1}) = \lambda_d = (\lambda_1 \lambda_2 \dots \lambda_{d-1})^{-1} = (E_{d-1}^{(d-1)})^{-1} = \phi(x_{d-1})^{-1}.$$
 
 Using the fact that the $E_{d-1}^{(i)}$ are algebraically independent in $ \Z[\lambda_1, \lambda_2, \dots, \lambda_{d-1}]$, a simple argument shows that $\phi$ is injective. To prove the proposed relation, we therefore only need to show that the two sides have the same image under $\phi$. 
 
 The key property underlying the whole result is that, for $2\leq j \leq d-1$,
\begin{align*}
 \phi(y_j) 
 &= \phi(x_{j-1}x_{d-1}^{-1} + x_j)=  E_{d-1}^{(j-1)} \lambda_d + E_{d-1}^{(j)}
 \\
 &= \lambda_d \sum_{1\leq i_1< \dots<i_{j-1}\leq d-1} \lambda_{i_1} \lambda_{i_2} \dots \lambda_{i_{j-1}} 
 +  \sum_{1\leq i_1< \dots<i_j \leq d-1} \lambda_{i_1} \lambda_{i_2} \dots \lambda_{i_j} 
 \\
 &=   \sum_{1\leq i_1< \dots<i_j \leq d} \lambda_{i_1} \lambda_{i_2} \dots \lambda_{i_j}  = E_d^{(j)}. 
\end{align*}
A similar argument shows that $\phi(y_1)=E_d^{(1)}$. 

Then, for the left-hand side of the proposed equality,
\begin{align*}
 \phi \big( \widehat P_d^{(n,i)}(y_1, y_2, \dots, y_{d-1})  \big) &= \widehat P_d^{(n,i)}\big( \phi(y_1), \phi(y_2), \dots, \phi(y_{d-1}) \big)
 \\
 &= \widehat P_d^{(n,i)} (E_d^{(1)}, E_d^{(2)}, \dots, E_d^{(d-1)}) 
 \\
 &=  P_d^{(n,i)} (E_d^{(1)}, E_d^{(2)}, \dots, E_d^{(d-1)}, 1)
 \\
 &=  P_d^{(n,i)} (E_d^{(1)}, E_d^{(2)}, \dots, E_d^{(d-1)}, E_d^{(d)}) = E_d^{(n,i)}
\end{align*}
using the property that $E_d^{(d)} = \lambda_1 \lambda_2 \dots \lambda_d=1$ in the target space of $\phi$.

 For the right-hand side,
\begin{align*}
 \phi \big( x_{d-1}^{-n}  P_{d-1}^{(n, i-1)}& (x_1, x_2, \dots, x_{d-1}) +  P_{d-1}^{(n, i)} (x_1, x_2, \dots, x_{d-1}) \big) 
 \\
 &=\phi(x_{d-1}^{-1})^n P_{d-1}^{(n, i-1)} \big( \phi(x_1), \phi(x_2), \dots, \phi(x_{d-1}) \big) 
 \\
 &\qquad\qquad\qquad\qquad\qquad
 +  P_{d-1}^{(n, i)} \big( \phi(x_1), \phi(x_2), \dots, \phi(x_{d-1}) \big) 
 \\
 &= \lambda_d^n P_{d-1}^{(n, i-1)} (E_{d-1}^{(1)}, E_{d-1}^{(2)}, \dots, E_{d-1}^{(d-1)}) + P_{d-1}^{(n, i)} (E_{d-1}^{(1)}, E_{d-1}^{(2)}, \dots, E_{d-1}^{(d-1)}) 
 \\
 &= \lambda_d^n E_{d-1}^{(n, i-1)}+ E_{d-1}^{(n, i)} 
 \\
 &= \lambda_d^n \sum_{1\leq j_1< \dots<j_{i-1}\leq d-1} \lambda_{j_1}^n \lambda_{j_2}^n \dots \lambda_{j_{i-1}}^n 
 +  \sum_{1\leq j_1< \dots<j_i\leq d-1} \lambda_{j_1}^n \lambda_{j_2}^n \dots \lambda_{j_i}^n
 \\
 &=   \sum_{1\leq j_1< \dots<j_i\leq d} \lambda_{j_1}^n \lambda_{j_2}^n \dots \lambda_{j_i}^n = E_d^{(n,i)} = \phi \big( \widehat P_d^{(n,i)}(y_1, y_2, \dots, y_{d-1})  \big).  
\end{align*}

Since $\phi$ is injective, this concludes the proof. 
\end{proof}

\section{Computations in the annulus}
\label{sect:AnnulusComputations}

Inspired by earlier constructions of  Morton \cite{Mort}, L\^e \cite{Le1} and Queffelec-Wedrich \cite{QueWed}, we let $A= S^1 \times [0,1] $ be the annulus with two marked points $x_0 = (x,0)$ and $x_1=(x,1)$ on the boundary (for an arbitrary $x\in S^1$). Let $\Aio$ be the vector space generated by webs in $A$ with boundary $\{ x_0, x_1\}$, with orientation going inward at $x_0$ and  outward at $x_1$, and quotiented by the skein relations of \cite{CKM}. (The subscript $\mathrm{io}$  stands for ``in-out''.)

Figure~\ref{fig:Webs1} offers a few examples of webs representing elements of $\Aio$ in $A$. In particular, let $I\in \Aio$ be represented by the arc $x\times[0,1]$ of the first picture of Figure~\ref{fig:Webs1}, endowed with weight 1, and let the \emph{twist element} $T\in \Aio$ be the arc of the second diagram if Figure~\ref{fig:Webs1}, also endowed with weight 1. A more elaborate element $X_j \in \Aio$, with $1\leq j \leq d-2$, is represented by the third web of Figure~\ref{fig:Webs1}. 

\begin{figure}[htbp]

\SetLabels
(  .15*   -.15 )  $I$ \\
( .5 * -.15 )  $T$ \\
( .85 *   -.15 )   $X_j$ \\
( .17*.8 ) $1$ \\
(.5 * .1) $1$ \\
\L( .87*.83 ) $j+1$ \\
(.85 * .1) $j$ \\
( .89* .58) $1$ \\
( .78* .83) $1$ \\
\endSetLabels
\centerline{\AffixLabels{\includegraphics{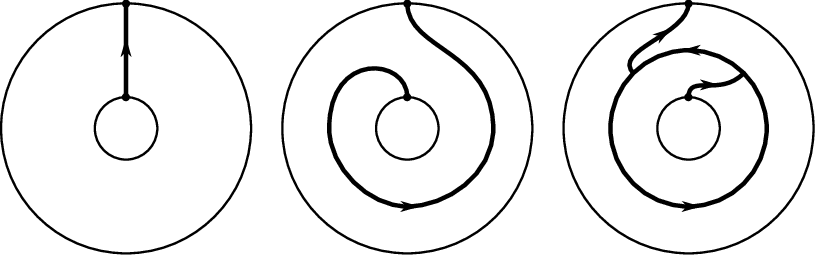}}}
\vskip 20pt

\caption{A few webs, representing the elements $I$, $T$, $X_j \in \Aio$}
\label{fig:Webs1}
\end{figure}

The  space $\Aio$ comes with a multiplication
$$
\circ\ \colon \Aio \otimes \Aio \to \Aio
$$
where the skein $W_1 \circ W_2$ is defined by placing $W_1$ in $S^1 \times [0,\frac12]$ and $W_2$ in $S^1\times [\frac12, 1]$. 

It also comes with a left and a right action of the usual skein algebra $\SSS(A)$ where, if $[W_0] \in \SSS(A)$ and $[W_1]\in \Aio$, $[W_0] \bullet  [W_1]$ is obtained by placing $[W_0]$ below $[W_1]$ and $[W_1] \bullet  [W_0]$ is obtained by placing $[W_0]$ on top of   $[W_1]$. We are particularly interested in the elements $I \bullet  L_j$ and $L_j \bullet  I\in \Aio$ illustrated in the last two pictures of Figure~\ref{fig:Webs2}, where $L_j \in \SSS(A)$ is represented by a simple loop going counterclockwise around the annulus and carrying weight $j$. 

\begin{figure}[htbp]

\SetLabels
(  .15*   -.15 )  $L_j$ \\
( .5 * -.15 )  $I \bullet  L_j$ \\
( .85 *   -.15 )   $L_j \bullet  I$ \\
( .15*.1 ) $j$ \\
(.5 * .1) $j$ \\
(.85 * .1) $j$ \\
( .485* .67) $1$ \\
( .83 * .67) $1$ \\
\endSetLabels
\centerline{\AffixLabels{\includegraphics{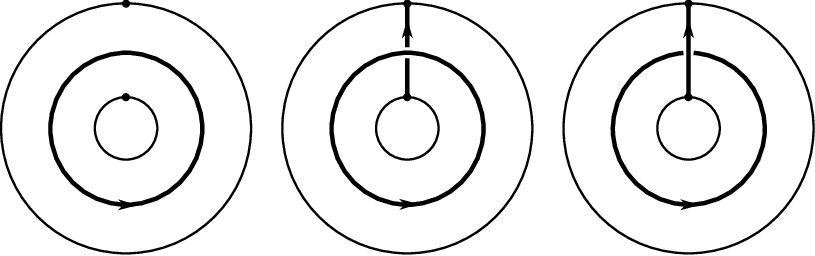}}}
\vskip 20pt

\caption{The skeins $L_j \in \SSS(A)$, $I \bullet  L_j\in \Aio$ and $L_j \bullet  I\in \Aio$}
\label{fig:Webs2}
\end{figure}

The following  lemma states that the elements $I$, $T$, $I \bullet  L_j$ and $L_j \bullet  I$ are central in $\Aio$, for the multiplication $\circ$. 

\begin{lem}
\label{lem:CentralElementsInAio}
 For every $X\in \Aio$,
\begin{align*}
 X\circ I &= I \circ X = X
 &
 X \circ T &= T\circ X
 \\
 X \circ (I \bullet  L_j) &= (I \bullet  L_j) \circ X
 &
 X \circ ( L_j \bullet  I) &= ( L_j \bullet  I) \circ X.
\end{align*}
\end{lem}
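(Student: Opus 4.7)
The relation $X \circ I = I \circ X = X$ is immediate: the identity arc $I$ is a single vertical weight-$1$ strand, so composing with it amounts to a radial rescaling of $X$ inside the annulus, which does not change the isotopy class.

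For the remaining three centrality assertions, my plan is to argue by ambient isotopy in the $3$--manifold $A \times [0,1]$ in which the webs representing $\Aio$ naturally live. The product direction of $\circ$ is the $[0,1]$--factor of $A = S^1 \times [0,1]$, while the second $[0,1]$--factor plays the role of a ``height'' direction, perpendicular to the $\circ$--direction, along which the action $\bullet$ stacks. For each $Y \in \{T,\, I \bullet L_j,\, L_j \bullet I\}$ I would exhibit an ambient isotopy of $A \times [0,1]$, rel the boundary points $\{x_0, x_1\}$, carrying the web representing $X \circ Y$ to the web representing $Y \circ X$.

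Taking first $Y = I \bullet L_j$: by the definition of $\bullet$, the loop $L_j$ sits at a height strictly above the arc $I$, so in the composite $X \circ (I \bullet L_j)$ it lies at a height strictly above the entire web $X$. The isotopy slides $L_j$ continuously in the radial direction from the upper half of the annulus into the lower half, keeping it at its elevated height and hence disjoint from $X$ throughout; the arc $I$ reattaches trivially on the opposite side of $X$. The symmetric case $Y = L_j \bullet I$ is handled identically, with $L_j$ kept strictly below $X$ in the height direction.

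The case $Y = T$ is the subtlest and is where I expect the main obstacle to lie. Both $X \circ T$ and $T \circ X$ are webs in $A \times [0,1]$ with the same combinatorial structure, the same boundary, and the same total winding around the $S^1$--factor of the main strand from $x_0$ to $x_1$ (since $T$ contributes a single turn independently of which radial half it occupies). I would realize the isotopy by simultaneously sliding the body of $X$ from the lower to the upper radial half while sliding $T$ in the opposite direction, using the height coordinate to separate the two configurations at any radial position where they would otherwise collide. The delicate point is that a naive planar slide of a winding arc past $X$ threatens to leave behind a Dehn-twist monodromy on $X$; the technical content of the argument is that the third dimension absorbs this potential Dehn twist, so that no nontrivial skein relations are triggered and the isotopy remains rel boundary.
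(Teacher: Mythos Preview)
Your proposal is correct and follows exactly the route the paper takes: the paper's own proof is the single sentence ``These properties are easily checked by elementary isotopies in the thickened annulus $A\times[0,1]$,'' and you have supplied the details of those isotopies. Your hedging on the $T$ case is a bit more than necessary---the ``rotate $X$ around the $S^1$ factor while trading winding between the two connecting arcs, using height to avoid collisions'' picture you sketch is precisely the elementary isotopy in question---but the argument is sound.
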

\begin{proof}
 These properties are easily checked by elementary isotopies in the thickened annulus $A\times [0,1]$. 
\end{proof}

A less immediate relation between the skeins of Figures~\ref{fig:Webs1}--\ref{fig:Webs2} is provided by the skein relations of \S \ref{sect:SkeinModule}.  

\begin{lem}
\label{lem:ITimesL_j}
For $1\leq j \leq d-1$, 
\begin{align*}
 I \bullet  L_j&= 
\begin{cases}
q^{\frac {d-1}d}  T  -q^{-\frac 1d }   X_1 & \text{if } j=1
 \\
 (-1)^{j-1} q^{\frac {d-j}d}  X_{k-1} \circ T + (-1)^j q^{-\frac jd} X_j \hskip 1em & \text{if } 2\leq j \leq d-2
 \\
  (-1)^{d-2} q^{\frac {1}d}  X_{d-2} \circ T + q^{\frac {1-d}d} T^{-1}& \text{if } j=d-1
  \end{cases}
  \\
   L_j \bullet  I  &= 
\begin{cases}
q^{\frac {1-d}d}  T  -q^{\frac 1d }   X_1 & \hskip .5em \text{if } j=1
 \\
 (-1)^{j-1} q^{\frac {j-d}d}  X_{j-1} \circ T + (-1)^j q^{\frac jd} X_j \hskip 1em & \hskip .5em  \text{if }  2\leq j \leq d-2
 \\
  (-1)^{d-2} q^{-\frac {1}d}  X_{d-2} \circ T +  q^{\frac {d-1}d} T^{-1}& \hskip .5em \text{if } j=d-1
\end{cases}
\end{align*}
where $T^{-1}$ is the inverse of $T$ for the composition operation $\circ$ (which is also its mirror image). 
\end{lem}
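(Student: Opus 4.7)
The strategy is to apply the braiding relations of Figure~\ref{fig:Braiding} at the unique crossing where the weight-$1$ arc $I$ meets the weight-$j$ loop $L_j$, and to identify each resulting H-web as an explicit element of $\Aio$. The two formulas are proved in parallel since $I\bullet L_j$ and $L_j\bullet I$ differ only in the handedness of this crossing: in $I\bullet L_j$ the arc lies above the loop in the thickening, in $L_j\bullet I$ it lies below, so the two braiding relations of Figure~\ref{fig:Braiding} apply respectively and produce the swap $q^{\pm j/d}\leftrightarrow q^{\mp j/d}$ between the two formulas.

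Specializing $i=1$ restricts the braiding sum to $m\in\{0,1\}$, so only two H-webs appear. The resulting H-web has middle-edge weights $m$ and $j+1-m$, lower rung of weight $1-m$ and upper rung of weight $j-m$. For $m=0$ the arc and loop merge through the lower rung (weight $1$) into a locally weight-$(j+1)$ segment and then split back via the upper rung (weight $j$), producing exactly the web $X_j$ of Figure~\ref{fig:Webs1} in the generic range $2\leq j\leq d-2$. For $m=1$ the lower rung disappears, and the upper rung (weight $j-1$) connects the two middle edges of weights $1$ and $j$; using the splitting skein of Figure~\ref{fig:Skein2} to decompose the weight-$j$ strand now traversing the annulus into parallel weight-$1$ and weight-$(j-1)$ substrands, one recognizes $X_{j-1}\circ T$: the weight-$1$ substrand loops once around the annulus and supplies $T$, while the weight-$(j-1)$ substrand fuses with the chord into the local merge-split of $X_{j-1}$. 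Multiplying by the braiding prefactor $(-1)^{j}q^{\pm j/d}(-q)^{\mp m}$ then yields the generic formulas.

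The boundary cases require minor adaptations. For $j=1$, the $m=1$ H-web has an upper rung of weight $0$, which is erased; the merge-split step vanishes and only the bare twist $T$ remains, consistent with the formal convention $X_0\circ T = T$; the $m=0$ case still gives $X_1$. For $j=d-1$, the $m=0$ H-web produces a middle edge of weight $d$, which, per the convention alluded to in \S\ref{sect:SkeinModule}, must be cut into two stumps. The plan is to pin down the precise stump-splitting convention at this step and then apply the stump-swap relation of Figure~\ref{fig:Skein3} (introducing the sign $(-1)^{i(d-i)} = (-1)^{d-1}$) together with the stump-move relation to collapse the remaining weight-$(d-1)$ strand traversing the annulus into a single arc wrapping the annulus in the reverse direction, namely $T^{-1}$; the $m=1$ case for $j=d-1$ proceeds as in the generic case and yields $X_{d-2}\circ T$.

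The main obstacle will be precisely this $j=d-1$ analysis. Specifying the stump-splitting convention and carefully tracking the chirality of the resulting stumps through the stump moves, in such a way that the prefactor $q^{(1-d)/d}$ and the correct sign appear and that the outcome is $T^{-1}$ (rather than $T$), is the only nontrivial point; all other identifications reduce to routine planar-isotopy arguments on H-webs in the annulus.
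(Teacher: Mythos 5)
Your proposal follows essentially the same route as the paper: resolve the single crossing with the braiding relations of Figure~\ref{fig:Braiding}, note that the weight $1$ of the arc restricts the sum to $m\in\{0,1\}$, identify the two resulting webs as $X_j$ and $X_{j-1}\circ T$, and treat $j=1$ (two weight-$0$ edges, leaving $T$) and $j=d-1$ (a weight-$d$ edge, handled by the stump conventions and the relations of Figure~\ref{fig:Skein3}, producing the sign $(-1)^{d-1}$ and the mirror image $T^{-1}$) as degenerate cases. The signs, the exponents $q^{\pm j/d}$, $q^{\pm(d-j)/d}$, and the role of the two braiding relations in distinguishing $I\bullet L_j$ from $L_j\bullet I$ all match the paper's computation.

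One step is justified incorrectly, though the conclusion is right: to recognize the $m=1$ web as $X_{j-1}\circ T$ you invoke the relation of Figure~\ref{fig:Skein2} to ``decompose the weight-$j$ strand into parallel weight-$1$ and weight-$(j-1)$ substrands.'' That relation only removes a split--merge bigon at the cost of a quantum binomial, so used in the way you describe it would introduce a spurious factor $\qint{j}$; no skein relation is needed here. The $m=1$ web already contains exactly one split and one merge vertex joined by the weight-$(j-1)$ rung, and sliding the merge vertex once around the annulus along the weight-$j$ edge is a plain isotopy carrying the web to $X_{j-1}\circ T$ (this is how the paper identifies it). Beyond that, your $j=d-1$ discussion is only a plan --- you defer pinning down the stump-splitting convention --- but the ingredients you list (the two-step split-and-flip convention of \cite{CKM}, the second and third relations of Figure~\ref{fig:Skein3}, the sign $(-1)^{d-1}$ absorbing the generic prefactor to leave $q^{\frac{1-d}d}T^{-1}$) are exactly what the paper's Figure~\ref{fig:ITimesL_(d-1)} carries out.
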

\begin{proof}
 This follows from an application of the braiding relations of Figure~\ref{fig:Braiding}, which express $ L_j \bullet  I $ and $ I \bullet  L_j$ as a linear combination of two webs.

  \begin{figure}[htbp]
\begin{align*}
 \raisebox{-60pt}{\SetLabels
( .55 * .85)  $1$ \\
( .5 * .1 )  $j$ \\
(  *  )   \\
\endSetLabels
{\AffixLabels{\includegraphics{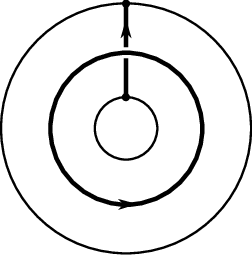}}}
} &= 
 (-1)^{j-1} q^{\frac {d-j}d} 
 \raisebox{-60pt}{\SetLabels\small
( .55 * .91)  $1$ \\
( .55 * .65)  $1$ \\
( .5 * .1 )  $j$ \\
\L( .58 * .85 )  $j$ \\
\R(  .45* .85 )  $j-1$ \\
\R( .45 * .68 )   $1$\\
\endSetLabels
{\AffixLabels{\includegraphics{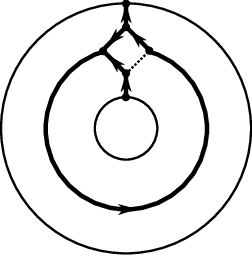}}}
}
\\
&\qquad\qquad\qquad\qquad
 + (-1)^j q^{-\frac jd}
\raisebox{-60pt}{\SetLabels\small
( .55 * .91)  $1$ \\
( .45 * .65)  $1$ \\
( .5 * .1 )  $j$ \\
\L( .58 * .85 )  $j+1$ \\
\R(  .44* .85 )  $j$ \\
\L( .55 * .68 )   $1$\\
\endSetLabels
{\AffixLabels{\includegraphics{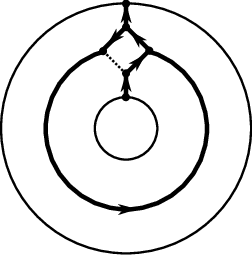}}}
}
\end{align*}

\caption{The proof of Lemma~\ref{lem:ITimesL_j} when $2\leq j \leq d-2$}
\label{fig:ITimesL_j}
\end{figure}
 
 When $2\leq j \leq d-2$, the computation for $I \bullet L_j $ is illustrated in Figure~\ref{fig:ITimesL_j}. On the right hand side of the equation, the webs represented each have one edge carrying weight $0$ (represented by a dotted line in the pictures) which must be erased by the conventions stated in \S \ref{sect:SkeinModule}. The first web is easily seen to be isotopic to $X_{k-1} \circ T$, while the second web is isotopic to $X_j$.

 When $j=1$, the first web occurring in the same computation now has two edge weights equal to 0. After erasing the corresponding two edges, the resulting web is isotopic to $T$. The second web is still isotopic to $X_1$. 
 
  \begin{figure}[htbp]

\begin{align*}
\raisebox{-60pt}{\SetLabels\small
( .55 * .91)  $1$ \\
( .45 * .65)  $1$ \\
( .5 * .1 )  $d-1$ \\
\L( .58 * .85 )  $d$ \\
\R(  .44* .85 )  $d-1$ \\
\L( .55 * .68 )   $1$\\
\endSetLabels
{\AffixLabels{\includegraphics{Webs5.eps}}}
}
=
\raisebox{-60pt}{\SetLabels\small
( .55 * .91)  $1$ \\
( .45 * .65)  $1$ \\
( .5 * .1 )  $d-1$ \\
\L( .63 * .85 )  $d$ \\
\R(  .38* .89 )  $d$ \\
\L( .55 * .68 )   $1$\\
\endSetLabels
{\AffixLabels{\includegraphics{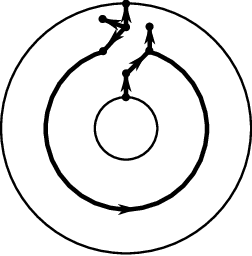}}}
}
=(-1)^{d-1}
\raisebox{-60pt}{\SetLabels\small
( .55 * .91)  $1$ \\
( .45 * .65)  $1$ \\
( .5 * .1 )  $1$ \\
\L( .55 * .68 )   $1$\\
\endSetLabels
{\AffixLabels{\includegraphics{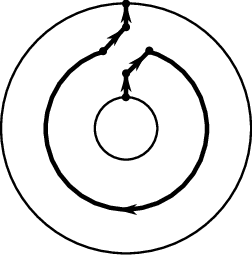}}}
}
\end{align*}
\caption{The proof of Lemma~\ref{lem:ITimesL_j} when $ k = d-1$}
\label{fig:ITimesL_(d-1)}
\end{figure}

 When $j=d-1$, the first web is still $X_{d-2} \circ T$ but the second web has an edge weight equal to $d$. We now need to use the conventions of \cite{CKM} for this case, which we had skipped in our discussion in  \S \ref{sect:SkeinModule}. These  involve a two-step process, first splitting the weight $d$ edge into two stumps and then flipping the resulting inward stump to the other side of the split vertex at which it is attached (see the top of Page 358 of \cite{CKM}).  After applying the second and third skein relations of Figure~\ref{fig:Skein3} followed by an isotopy, we obtain the mirror image of $T$, which is also $T^{-1}$ in $\Aio$. See Figure~\ref{fig:ITimesL_(d-1)}.  
 
 This completes the proof of the statement of Lemma~\ref{lem:ITimesL_j} for $I \bullet  L_j$. The proof for  $L_j \bullet I$ is essentially identical. 
\end{proof}

\begin{lem}
\label{lem:X_jCentralInAio}
  For every $X\in \Aio$ and every $j$ with $1\leq j \leq d-2$, 
  $$
  X \circ X_j = X_j \circ X.
  $$
\end{lem}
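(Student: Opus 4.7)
The plan is to establish centrality of $X_j$ by induction on $j$, using Lemma~\ref{lem:ITimesL_j} to solve algebraically for $X_j$ in terms of elements already known to be central. The key observation is that Lemma~\ref{lem:CentralElementsInAio} supplies the central elements $T$, $I \bullet L_j$ and $L_j \bullet I$, while Lemma~\ref{lem:ITimesL_j} writes the latter two as explicit linear combinations involving $X_j$, $X_{j-1} \circ T$ and (in the extreme cases) $T^{\pm 1}$.

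For the base case $j=1$, the first formula of Lemma~\ref{lem:ITimesL_j} reads
$$I \bullet L_1 = q^{\frac{d-1}{d}}\, T - q^{-\frac{1}{d}}\, X_1,$$
which I would solve to obtain $X_1 = q\, T - q^{\frac{1}{d}} (I \bullet L_1)$. Both summands on the right-hand side are central in $\Aio$ by Lemma~\ref{lem:CentralElementsInAio}, so $X_1$ is central.

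For the inductive step $2 \leq j \leq d-2$, assume $X_{j-1}$ is central in $\Aio$. I would first observe that $X_{j-1} \circ T$ is then central as well: by associativity of $\circ$ together with the centrality of each factor, it commutes with every $X \in \Aio$. The middle case of Lemma~\ref{lem:ITimesL_j} now reads
$$I \bullet L_j = (-1)^{j-1} q^{\frac{d-j}{d}}\, X_{j-1} \circ T + (-1)^j q^{-\frac{j}{d}}\, X_j,$$
which I would solve for $X_j$, expressing it as a scalar combination of the two central elements $I \bullet L_j$ and $X_{j-1} \circ T$. This shows $X_j$ is central and completes the induction.

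I do not anticipate any genuine obstacle here: Lemma~\ref{lem:ITimesL_j} has already done the substantive computational work via the braiding relations of Figure~\ref{fig:Braiding}, and the present lemma is a purely formal algebraic consequence. The only small point requiring care is the fact that the $\circ$-product of two $\circ$-central elements is itself $\circ$-central, which follows immediately from associativity of $\circ$. As an internal sanity check, one could alternatively solve the companion equation for $L_j \bullet I$ from Lemma~\ref{lem:ITimesL_j} to recover the same $X_j$; consistency of the two derivations is an easy verification that also serves as a cross-check on the coefficients appearing in Lemma~\ref{lem:ITimesL_j}.
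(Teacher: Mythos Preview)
Your proposal is correct and follows essentially the same approach as the paper: an induction on $j$ using the formulas of Lemma~\ref{lem:ITimesL_j} to write $X_j$ as a polynomial (under $\circ$) in elements already known to be central by Lemma~\ref{lem:CentralElementsInAio}. The only cosmetic difference is that you solve the $I\bullet L_j$ equations while the paper's text refers to the $L_j\bullet I$ equations, which is immaterial.
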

\begin{proof}
 By induction on $j$, the formulas of Lemma~\ref{lem:ITimesL_j} show that, for the multiplication by composition $\circ$, the skein $X_j \in \Aio$ can be expressed as a polynomial in the skeins $I$, $T$ and $ L_j \bullet  I  $. Since these skeins are central in $\Aio$ by Lemma~\ref{lem:CentralElementsInAio}, so is $X_j$. 
\end{proof}

\begin{prop}
\label{prop:PowerElementaryTimesI}
Suppose that the $d$--root $q^{\frac1d}$ occurring in the braiding relations of Figure~{\upshape\ref{fig:Braiding}} is a $2n$--root of unity, and let $\widehat P_d^{(n,i)} \in \Z[e_1, e_2, \dots, e_{d-1}]$ be the reduced power elementary   polynomial of {\upshape \S \ref{sect:PowerPols}}. Then, for the framed link $L\subset A$ and the skein $I \in \Aio$ as above,
$$
L^{[\widehat P_d^{(n,i)}]} \bullet  I = I \bullet L^{[\widehat P_d^{(n,i)}]} .
$$
\end{prop}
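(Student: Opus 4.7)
The plan is to reduce the identity $L^{[\widehat P_d^{(n,i)}]} \bullet I = I \bullet L^{[\widehat P_d^{(n,i)}]}$ to a polynomial identity among central elements of $(\Aio, \circ)$, and then to apply Proposition~\ref{prop:PowerElementaryPolMainProp} combined with the homogeneity property of Lemma~\ref{lem:PowerElementaryPolHomogeneous}. The parallel loops $L_1, L_2, \ldots, L_{d-1}$ commute in $\SSS(A)$, so the threaded skein $L^{[\widehat P_d^{(n,i)}]}$ equals $\widehat P_d^{(n,i)}(L_1, \ldots, L_{d-1})$ inside the commutative subalgebra they generate. Using that $I$ is a two-sided identity for $\circ$ (Lemma~\ref{lem:CentralElementsInAio}) together with an easy isotopy letting parallel loops migrate to different heights in the $[0,1]$-factor of $A$, both $X \mapsto X \bullet I$ and $X \mapsto I \bullet X$ are ring homomorphisms from this subalgebra to $(\Aio, \circ)$. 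It therefore suffices to prove
$$
\widehat P_d^{(n,i)}(L_1 \bullet I, \ldots, L_{d-1} \bullet I) = \widehat P_d^{(n,i)}(I \bullet L_1, \ldots, I \bullet L_{d-1})
$$
in $(\Aio, \circ)$; moreover, by Lemmas~\ref{lem:CentralElementsInAio} and~\ref{lem:X_jCentralInAio}, all of $T^{\pm 1}$ and the $X_j$ are central for $\circ$, so both sides lie in a commutative subalgebra where Proposition~\ref{prop:PowerElementaryPolMainProp} applies.

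Writing $\omega = q^{\frac1d}$, I would then set $x_j := (-\omega^{-1})^j X_j$ for $1 \le j \le d-2$ and $x_{d-1} := \omega^{1-d}\, T^{-1}$, so that $x_{d-1}^{-1} = \omega^{d-1}\, T$; checking each of the three cases $j=1$, $2 \le j \le d-2$ and $j=d-1$ of Lemma~\ref{lem:ITimesL_j} shows that $I \bullet L_j = x_{j-1} x_{d-1}^{-1} + x_j$ (with the Proposition~\ref{prop:PowerElementaryPolMainProp} convention for $j=1$), that is, $I \bullet L_j = y_j$. Symmetrically the choice $\tilde x_j := (-\omega)^j X_j$ for $1 \le j \le d-2$ and $\tilde x_{d-1} := \omega^{d-1}\, T^{-1}$ yields $L_j \bullet I = \tilde x_{j-1} \tilde x_{d-1}^{-1} + \tilde x_j = \tilde y_j$. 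The decisive observation is the uniform rescaling $\tilde x_j = (\omega^2)^j x_j$ for every $1 \le j \le d-1$.

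With these substitutions in hand, Proposition~\ref{prop:PowerElementaryPolMainProp} expresses the two sides of the reduced identity as combinations of $P_{d-1}^{(n,i-1)}$ and $P_{d-1}^{(n,i)}$; applying Lemma~\ref{lem:PowerElementaryPolHomogeneous} with $\theta = \omega^2$ rewrites the $\tilde x$-side as
$$
\omega^{2n(i-d)}\, x_{d-1}^{-n}\, P_{d-1}^{(n,i-1)}(x_1, \ldots, x_{d-1}) + \omega^{2ni}\, P_{d-1}^{(n,i)}(x_1, \ldots, x_{d-1}),
$$
and the hypothesis $(q^{\frac1d})^{2n} = 1$ forces $\omega^{2ni} = \omega^{2n(i-d)} = 1$, matching the $x$-side exactly. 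The main creative step, and the place where the argument could stall without the right insight, is the choice of the rescalings $x_j$ and $\tilde x_j$: one must arrange that \emph{both} $I \bullet L_j$ and $L_j \bullet I$ fit the $y_j$-shape of Proposition~\ref{prop:PowerElementaryPolMainProp}, and that the two families are related by a \emph{single} character $j \mapsto \theta^j$ so that Lemma~\ref{lem:PowerElementaryPolHomogeneous} absorbs the discrepancy in one stroke; once this scaling is fixed, the rest is routine bookkeeping.
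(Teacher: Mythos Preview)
Your proof is correct and follows essentially the same approach as the paper: both arguments reduce to the identity $\widehat P_d^{(n,i)}(L_1\bullet I,\dots)=\widehat P_d^{(n,i)}(I\bullet L_1,\dots)$, use Lemma~\ref{lem:ITimesL_j} to put each $L_j\bullet I$ and $I\bullet L_j$ into the $y_j$-form of Proposition~\ref{prop:PowerElementaryPolMainProp}, and then combine that proposition with the homogeneity of Lemma~\ref{lem:PowerElementaryPolHomogeneous} and the hypothesis $q^{2n/d}=1$. The only cosmetic difference is that the paper evaluates each side explicitly in the $X_j$ and $T$ before comparing coefficients, whereas you package the comparison as the single rescaling $\tilde x_j=(\omega^2)^j x_j$ and let Lemma~\ref{lem:PowerElementaryPolHomogeneous} do the work in one stroke.
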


\begin{proof}
 Consider $\Aio$ as a ring for the multiplication by composition $\circ$. Then, the commutativity property of Lemma~\ref{lem:X_jCentralInAio} shows that there is a unique ring homomorphism 
 $$
 \psi \colon \Z [x_1, x_2, \dots, x_{d-2}, x_{d-1}^{\pm1}] \to \Aio
 $$
 such that $\psi(x_{d-1})= q^{\frac{1-d}d}T^{-1}$ and $\psi(x_j) = (-1)^j q^{\frac jd} X_j$ for every $j$ with  with $1\leq j \leq d-2$. 
 
 If we set 
 $$
y_j =
\begin{cases}
x_{d-1}^{-1} + x_1 &\text{if } i=1
 \\
 x_{j-1} x_{d-1}^{-1} + x_j &\text{if } 2\leq j \leq d-1 
\end{cases}
$$
as in Proposition~\ref{prop:PowerElementaryPolMainProp}, the first batch of computations in Lemma~\ref{lem:ITimesL_j} show that $\psi(y_j)= L_j \bullet  I $ for every $j$. Applying the ring homomorphism $\psi$  to both sides of the conclusion 
$$
\widehat P_d^{(n,i)}(y_1, y_2, \dots, y_{d-1}) = x_{d-1}^{-n}P_{d-1}^{(n, i-1)} (x_1, x_2, \dots, x_{d-1}) +  P_{d-1}^{(n, i)} (x_1, x_2, \dots, x_{d-1})
$$
of Proposition~\ref{prop:PowerElementaryPolMainProp}, we conclude that 
\begin{align*}
 \widehat P_d^{(n,i)}( L_1 \bullet  I,  L_2 \bullet  I, &\dots,  L_{d-1}\bullet  I)
 \\
  &= q^{\frac {n(1-d)}d} T^n \circ  P_{d-1}^{(n, i-1)} \big( -q^{\frac1d} X_1,  +q^{\frac2d} X_2, \dots,  (-1)^{d-1} q^{\frac{d-1}d}X_{d-1} \big) 
 \\
 &\qquad\qquad +  P_{d-1}^{(n, i)} \big( -q^{\frac1d} X_1,  +q^{\frac2d} X_2, \dots,  (-1)^{d-1} q^{\frac{d-1}d}X_{d-1} \big) 
 \\
 &= (-1)^{n(i-1)} q^{\frac {n(i+1-d)}d} \, T^n \circ  P_{d-1}^{(n, i-1)} \big(  X_1,   X_2, \dots,  X_{d-1} \big) 
 \\
 &\qquad\qquad +   (-1)^{ni} q^{\frac {ni}d}  P_{d-1}^{(n, i)} \big( X_1,  X_2, \dots,  X_{d-1} \big),
\end{align*}
using Lemma~\ref{lem:PowerElementaryPolHomogeneous} for the second equality.

When evaluating a polynomial on elements of $\Aio$, we used the multiplication by juxtaposition $\circ$. However, in the case of the skeins $L_j \bullet  I$, this evaluation is also closely related to the multiplication by superposition $\bullet$ and to the threading operation. Indeed, by inspection of the definitions, 
$$
P( L_1 \bullet  I,  L_2 \bullet  I, \dots,  L_{d-1} \bullet  I) = L^{[P]} \bullet I
$$
for every polynomial $P \in \Z[e_1, e_2, \dots, e_{d-1}]$. In particular, we now conclude that 
\begin{align*}
 L^{[\widehat P_d^{(n,i)}]} \bullet I &= (-1)^{n(i-1)} q^{\frac {n(i+1-d)}d} \, T^n \circ  P_{d-1}^{(n, i-1)} \big(  X_1,   X_2, \dots,  X_{d-1} \big) 
 \\
 &\qquad\qquad\qquad\qquad +   (-1)^{ni} q^{\frac {ni}d} \, P_{d-1}^{(n, i)} \big( X_1,  X_2, \dots,  X_{d-1} \big).
\end{align*}

If we now use the second batch of computations in Lemma~\ref{lem:ITimesL_j}, where $q$ is replaced by $q^{-1}$, the same arguments show that
\begin{align*}
I \bullet  L^{[\widehat P_d^{n,k}]}   &= (-1)^{n(i-1)} q^{-\frac {n(i+1-d)}d} \, T^n \circ  P_{d-1}^{(n, i-1)} \big(  X_1,   X_2, \dots,  X_{d-1} \big) 
 \\
 &\qquad\qquad\qquad\qquad +   (-1)^{ni} q^{-\frac {ni}d} \, P_{d-1}^{(n, i)} \big( X_1,  X_2, \dots,  X_{d-1} \big).
\end{align*}

We are now ready to use our hypothesis that $q^{\frac1d}$ is a $2n$--root of unity, which means that $ q^{\frac nd}  =   q^{-\frac nd} $. The above computations then show that $ L^{[\widehat P_d^{(n,i)}]} \bullet I  = I \bullet  L^{[\widehat P_d^{(n,i)}]}$. 
\end{proof}

\section{Central and transparent skeins from power elementary  polynomials}
\label{sect:CentralTransparent}

We now use Proposition~\ref{prop:PowerElementaryTimesI} to construct transparent elements in the skein module $\SSS(M)$. The following lemma will enable us to limit our argument to web edges that carry weight 1. 

\begin{lem}
\label{lem:Explosion}
 Let $W$ be a web in the $3$--manifold $M$, and let $B\subset M$ be a ball meeting $W$ along an arc contained in the interior of an edge $e$ of $W$ carrying weight $i \in \{1,2,\dots, d-1\}$. Suppose that $q^{2j}\neq 1$ for every $j \in \{2,3,\dots, i\}$, so that the quantum factorial $\qfact{i}$ is nonzero. Then there exists a web $W'$ in $M$ such that
\begin{enumerate}
 \item $[W] = \frac1{\qfact{i}} [W']$ in the skein module $\SSS(M)$;
 \item every point of $B\cap W'$ is contained in a weight $1$ edge of $W'$;
\item $W'$ is contained in an arbitrarily small neighborhood of $W$. 
\end{enumerate}
\end{lem}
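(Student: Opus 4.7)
The plan is to iteratively apply the skein relation of Figure~\ref{fig:Skein2} to ``explode'' the weight $i$ edge $e$ into $i$ parallel weight $1$ strands in the portion running through the ball $B$. Read from right to left with $i+j=k$ and $j=1$, that relation expresses a single edge of weight $k$ as $\qbinom{k}{1}^{-1}=\qint{k}^{-1}$ times the ``bubble'' obtained by splitting it into parallel strands of weights $1$ and $k-1$ that immediately re-merge. I would first apply this with $k=i$ to the edge $e$, placing the split vertex and the merge vertex just outside $\partial B$ on the two sides of the arc $B\cap W$. This placement is possible because $B\cap W$ lies in the interior of $e$, so there is room along $e$ on each side of $B$ for the two new trivalent vertices.

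The next step is to iterate: apply the same relation to the newly created weight $i-1$ strand, splitting off another weight $1$ copy and picking up a factor $\qint{i-1}^{-1}$; then to the weight $i-2$ strand, with factor $\qint{i-2}^{-1}$; and so on down to a weight $2$ strand, which splits into two parallel weight $1$ strands at a cost of $\qint{2}^{-1}$. At each stage the new split/merge vertices can be arranged to lie just outside $B$, close to the previous ones, with the new parallel strands in a progressively thinner tubular neighborhood of the previous weight $k$ strand. After $i-1$ iterations the portion of the resulting web $W'$ inside $B$ consists of exactly $i$ parallel weight $1$ arcs, the entire nested tree of merge/split vertices sits on the two components of $e \setminus B$ near $\partial B$, and the accumulated scalar is
$$
\frac{1}{\qint{i}\,\qint{i-1}\cdots \qint{2}} = \frac{1}{\qfact{i}},
$$
using $\qint{1}=1$. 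The hypothesis that $q^{2j}\neq 1$ for $j\in\{2,\dots,i\}$ is precisely what ensures that each factor $\qint{j}$ in the denominator is nonzero, so that $\qfact{i}\neq 0$ and the division is legitimate.

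This gives (1) and (2) of the Lemma directly, and (3) holds because the entire construction is local: every parallel strand and every new vertex can be chosen in an arbitrarily small tubular neighborhood of the arc $B\cap W\subset e$. The one point that genuinely requires care, and which I expect to be the main (if modest) obstacle, is to check that at each stage the inserted trivalent vertices can be realized as honest vertices of an \emph{embedded} ribbon web extending the ribbon structure on $e$, with the right conventions on orientations and weights so that Figure~\ref{fig:Skein2} really applies. This is routine because the ribbon of $e$ trivializes a neighborhood of $e$ in $M$, on which the nested bubble construction can be drawn as parallel copies inside a single embedded ribbon disk, exactly mirroring the planar picture of Figure~\ref{fig:Skein2}.
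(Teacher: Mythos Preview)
Your proof is correct and follows exactly the same approach as the paper: iteratively apply the skein relation of Figure~\ref{fig:Skein2} (with $j=1$) to peel off weight~$1$ strands one at a time, accumulating the factor $\qint{i}^{-1}\qint{i-1}^{-1}\cdots\qint{2}^{-1}=\qfact{i}^{-1}$. The paper states this in two sentences and a picture; your version simply spells out the details.
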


\begin{proof} The skein relation of Figure~\ref{fig:Skein2} gives us the relation of Figure~\ref{fig:Explosion}. 
The result then follows by repeated application of this property. 
\end{proof}

\begin{figure}[htbp]

\begin{align*}
 \raisebox{-30pt}{\SetLabels
\L( .55 * .5 ) $i$  \\
\R\E(  0* .5 )  $B$ \\
(  *  )   \\
\endSetLabels
{\AffixLabels{\includegraphics{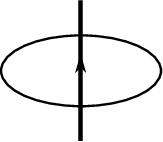}}}
} 
= \frac1{\qint i}\ 
  \raisebox{-30pt}{\SetLabels\small
\L( .6 * .5 ) $i-1$  \\
\L\E(  1* .5 )  $B$ \\
\R( .4 * .5 )   $1$\\
\endSetLabels
{\AffixLabels{\includegraphics{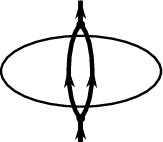}}}
} 
\end{align*}
\caption{The proof of Lemma~\ref{lem:Explosion}}
\label{fig:Explosion}
\end{figure}

\begin{thm}
\label{thm:PowerElementaryThreadingTransparent}
Suppose that the $d$--root $q^{\frac 1d}$ occurring in the definition of skein modules $\SSS(M)$ is such that $q^{\frac {2n}d}=1$, and that $q^{2i}\neq 1$ for every integer $i$ with $2\leq i \leq \frac d2$. Then, for every $i=1$, $2$, \dots, $d-1$,   threading  the reduced power elementary  polynomial  $\widehat P_d^{(n,i)} \in \Z[e_1, e_2, \dots, e_{d-1}]$ is transparent in the skein module $\SSS(M)$ of any oriented  $3$--manifold $M$. 
\end{thm}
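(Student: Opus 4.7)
The plan is to localize the transparency question to a tubular neighborhood of the knot $L$ and then invoke Proposition~\ref{prop:PowerElementaryTimesI}. The transparency property is a local condition: any isotopy of $L$ in $M$ that crosses $W$ can be decomposed into finitely many elementary moves, each either avoiding $W$ (hence obviously preserving the skein) or pushing a short arc of $L$ transversally through a single edge of $W$. It therefore suffices to check that one such elementary crossing move leaves the skein class $[L^{[\widehat P_d^{(n,i)}]} \sqcup W] \in \SSS(M)$ unchanged.

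The first step would be to reduce to a crossing with a weight-$1$ edge. If the edge $e$ of $W$ being crossed has weight $j > 1$, I would apply Lemma~\ref{lem:Explosion} locally, replacing a small portion of $e$ near the crossing by a linear combination of webs whose intersection with the relevant ball consists of weight-$1$ strands, at the cost of a nonzero multiplicative factor. When $j > d/2$, I would first use the stump relations of Figure~\ref{fig:Skein3} to reverse $e$ into an edge of weight $d - j \leq d/2$, so that only quantum factorials $\qfact{k}$ with $k \leq d/2$ appear in the explosion. The hypothesis $q^{2k} \neq 1$ for $2 \leq k \leq d/2$ is exactly what is required to make all of these quantum factorials nonzero. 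Since pushing $L$ across a bundle of parallel weight-$1$ strands is the composition of pushing across each strand individually, transparency against a single weight-$1$ strand implies transparency against the original weight-$j$ edge.

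The main step is then to verify transparency against a single weight-$1$ strand $\alpha$ of $W$. I would choose a tubular neighborhood $N$ of $L$ in $M$ thin enough that the parallel copies of $L$ defining $L^{[\widehat P_d^{(n,i)}]}$ all lie inside $N$, and such that $\alpha \cap N$ is a single short arc meeting $\partial N$ transversally at two points $p_1, p_2$. One then identifies $N$ with the thickened annulus $A \times [0,1]$ of \S\ref{sect:AnnulusComputations}, mapping $L$ to a core loop of the annulus, each parallel copy of $L$ in $L^{[\widehat P_d^{(n,i)}]}$ to a corresponding loop of type $L_j$ with the appropriate weight, and $\alpha \cap N$ to the vertical arc $I$ of Figure~\ref{fig:Webs1}. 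Under this identification, the two configurations of $\alpha \cap N$ before and after the elementary crossing move correspond respectively to the two products $I \bullet L^{[\widehat P_d^{(n,i)}]}$ and $L^{[\widehat P_d^{(n,i)}]} \bullet I$ in $\Aio$, which differ precisely by the crossing change occurring where $\alpha$ meets the meridian disk of $N$ carrying the parallel copies. By Proposition~\ref{prop:PowerElementaryTimesI}, these two elements of $\Aio$ coincide. Gluing this equality along the boundary points $p_1, p_2$ to the common extension provided by the rest of $W$ (namely the portion of $\alpha$ outside $N$, together with the complementary web $W \setminus \alpha$) transports it into an equality in $\SSS(M)$, which is exactly the desired invariance of $L^{[\widehat P_d^{(n,i)}]} \sqcup W$ under the crossing move.

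The main obstacle I would expect is the careful geometric bookkeeping in the identification of $N$ with the thickened annulus of \S\ref{sect:AnnulusComputations}: one must arrange orientations, framings, and the sign of the crossing so that Proposition~\ref{prop:PowerElementaryTimesI} applies in the correct form, and verify that the skein relation extracted from $\Aio$ really does transport cleanly to $\SSS(M)$ by gluing. Once this local-to-global matching is in place, the conclusion is an immediate application of the proposition.
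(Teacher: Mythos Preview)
Your proposal is correct and follows essentially the same route as the paper: decompose the isotopy into elementary crossings, reduce to a weight-$1$ strand via the stump relation (for weights $>d/2$) followed by Lemma~\ref{lem:Explosion}, then identify a thickened annulus neighborhood so that the before/after configurations become $I \bullet L^{[\widehat P_d^{(n,i)}]}$ and $L^{[\widehat P_d^{(n,i)}]} \bullet I$ and apply Proposition~\ref{prop:PowerElementaryTimesI}. The only cosmetic difference is that the paper phrases the localization as an explicit embedding of $A\times[0,1]$ capturing the link near the crossing time, whereas you describe it as a tubular neighborhood of $L$; the geometric bookkeeping you flag is exactly what the paper's three itemized conditions on that embedding pin down.
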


\begin{proof}
 Let $W$ be a web in a oriented 3--manifold $M$, and let $L_1$ and $L_2$ be two framed knots in $M$ that are disjoint from $W$ and isotopic to each other by an isotopy that is allowed to cross $W$. We want to show that $L_1^{[\widehat P_d^{(n,i)}]} \sqcup W$ and $L_2^{[\widehat P_d^{(n,i)}]} \sqcup W$ represent the same element in $\SSS(M)$. 
 
By decomposing the isotopy into little steps, we can clearly restrict attention to the case where it crosses $W$ in exactly one point, located in an edge of $W$ carrying weight $i$. If $i>\frac d2$, we can use the second skein relation of Figure~\ref{fig:Skein3} to replace $i$ by $d-i$; we can therefore assume that $i \leq \frac d2$, and in particular that $\qfact i \neq 0$ by our hypotheses on $q$. Then, applying Lemma~\ref{lem:Explosion} to a small ball around the crossing point enables us to restrict attention to the case where the isotopy crosses $W$ transversely in one point contained in an edge $e$ with weight $1$. 

 By transversality, we can further choose the isotopy so that, for the annulus $A$, there is an embedding of $A\times [0,1]$ in $M$ such that:
\begin{enumerate}
\item the intersection of the edge $e$ with $A\times [0,1]$ is equal to $I \times \frac 12$ for the arc $I$ of Figure~\ref{fig:Webs1}, and the ribbon structure there is horizontal for the projection to $A$;

\item shortly around the time when the isotopy crosses $e$, the link is contained in $A\times [0,1]$, its projection to $A$ is equal to the link $L$ of Figure~\ref{fig:Webs2}, and its ribbon structure is horizontal;

\item the link is contained in $A\times [\frac12,1]$ shortly before the isotopy crosses $W$, and in $A\times [0,\frac12]$ shortly after that.
\end{enumerate}
Restricting the isotopy to times near the crossing time, we can even assume that $L_1$ is contained in $A\times [\frac12,1]$, that $L_2$ is contained in $A\times [0,\frac12]$

We can then apply Proposition~\ref{prop:PowerElementaryTimesI} to conclude that the intersections of $L_1^{[\widehat P_d^{(n,i)}]} \sqcup W$ and $L_2^{[\widehat P_d^{(n,i)}]} \sqcup W $  with $A\times [0,1]$ differ by a sequence of isotopies and skein relations supported in the interior of $A\times [0,1]$. Since $L_1^{[\widehat P_d^{(n,i)}]} \sqcup W$ and $L_2^{[\widehat P_d^{(n,i)}]} \sqcup W $ coincide outside of $A\times [0,1]$, we conclude that $[L_1^{[\widehat P_d^{(n,i)}]} \sqcup W] = [L_2^{[\widehat P_d^{(n,i)}]} \sqcup W]$ in $\SSS(M)$. 
\end{proof}

Applying Lemma~\ref{lem:TransparentGivesCentral}, an immediate corollary is that Theorem~\ref{thm:PowerElementaryThreadingTransparent} provides central elements in the skein algebra $\SSS(S)$.

\begin{cor}
 \label{cor:PowerElementaryThreadingCentral}
Suppose that the $d$--root $q^{\frac 1d}$ occurring in the definition of the skein algebra $\SSS(M)$ is such that $q^{\frac {2n}d}=1$, and that $q^{2i}\neq 1$ for every integer $i$ with $2\leq i \leq \frac d2$. In a thickened surface $S \times [0,1]$, let $L=L_1 \sqcup L_2 \sqcup \dots \sqcup L_c$ be a framed link in which each component $L_j$ carries a weight $i_j \in \{1,2, \dots, d-1\}$. Then the skein $L^{[\widehat P_d^{(n, \bullet)}]} \in \SSS(S)$ obtained by threading the reduced power elementary   polynomial $\widehat P_d^{(n, i_j)} \in \Z[e_1, e_2, \dots, e_{d-1}]$ along each component $L_j$ is central in the skein algebra $\SSS(S)$ of the surface $S$.  \qed
\end{cor}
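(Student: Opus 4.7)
The plan is to derive this corollary by essentially the same argument as Lemma~\ref{lem:TransparentGivesCentral}, with the one wrinkle that different components $L_j$ of $L$ are threaded by different polynomials $\widehat P_d^{(n,i_j)}$. Since the hypotheses on $q$ are exactly those of Theorem~\ref{thm:PowerElementaryThreadingTransparent}, we may freely invoke the transparency of threading along $\widehat P_d^{(n,i)}$ for every $i \in \{1,2,\dots,d-1\}$.

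First, I would set up the computation of commutators in $\SSS(S)$ exactly as in the proof of Lemma~\ref{lem:TransparentGivesCentral}. Given an arbitrary web $W$ in $S \times [\tfrac13, \tfrac23]$ representing a skein $[W] \in \SSS(S)$, the product $[L^{[\widehat P_d^{(n,\bullet)}]}] \bullet [W]$ is represented by $L'^{[\widehat P_d^{(n,\bullet)}]} \sqcup W$ where $L'$ is the rescaling of $L$ into $S \times [0,\tfrac13]$, and $[W] \bullet [L^{[\widehat P_d^{(n,\bullet)}]}]$ is represented by $L''^{[\widehat P_d^{(n,\bullet)}]} \sqcup W$ with $L''$ the rescaling into $S \times [\tfrac23, 1]$. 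The goal is to produce an equality between these two skein classes.

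Next, I would choose a generic isotopy from $L'$ to $L''$ in $S \times [0,1]$ that moves the components one at a time, so that each intermediate stage consists of a single component being pushed through $W$ while the remaining components stay put in their starting or ending positions. At each step, fix the component $L_{j_0}$ being moved; the rest of the configuration — namely $W$ together with all the already-threaded copies of the other components — is a linear combination of webs disjoint from $L_{j_0}$. By linearity and Theorem~\ref{thm:PowerElementaryThreadingTransparent} applied to each such web with the polynomial $\widehat P_d^{(n,i_{j_0})}$, moving $L_{j_0}^{[\widehat P_d^{(n,i_{j_0})}]}$ across this configuration leaves the resulting element of $\SSS(S \times [0,1])$ unchanged. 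Concatenating the steps yields $[L'^{[\widehat P_d^{(n,\bullet)}]} \sqcup W] = [L''^{[\widehat P_d^{(n,\bullet)}]} \sqcup W]$, which is exactly the centrality statement.

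There is essentially no obstacle here: all the technical difficulty — handling generic crossings through webs of arbitrary weight, the weight-reduction via Lemma~\ref{lem:Explosion}, and the delicate annular computation of Proposition~\ref{prop:PowerElementaryTimesI} — has already been absorbed into Theorem~\ref{thm:PowerElementaryThreadingTransparent}. The only point to check is that the transparency hypothesis may be applied componentwise with potentially distinct polynomials, but this is automatic since transparency of $\widehat P_d^{(n,i_{j_0})}$ along $L_{j_0}$ is insensitive to the fact that the ambient web already has the form of a linear combination of threadings of the other $L_j$.
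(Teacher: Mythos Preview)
Your proposal is correct and follows essentially the same approach as the paper, which simply cites Lemma~\ref{lem:TransparentGivesCentral} together with Theorem~\ref{thm:PowerElementaryThreadingTransparent} and marks the corollary with a \qed. You are in fact a bit more careful than the paper: Lemma~\ref{lem:TransparentGivesCentral} is stated for a single polynomial $P$ threaded along every component, whereas the corollary allows a different $\widehat P_d^{(n,i_j)}$ on each $L_j$, and your component-by-component isotopy argument is exactly the (straightforward) adaptation needed to bridge that gap.
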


\begin{rem}
\label{rem:ConditionsOnQ}
 In the statements of Theorem~\ref{thm:PowerElementaryThreadingTransparent} and Corollary~\ref{cor:PowerElementaryThreadingCentral}, the condition that $q^{2i}\neq 1$ for every $i$ with $2\leq i \leq \frac d2$ is an artifact of our use of Lemma~\ref{lem:Explosion} in the proof, and is probably unnecessary. 
\end{rem}
 
 \section{Two conjectures}
 \label{sect:Conjectures}
 
 We conclude with two conjectures. 
 
 The first conjecture is the obvious one regarding the center of the skein algebra $\SSS(S)$. In addition to the elements exhibited in this article, the center of $\SSS(S)$ admits more obvious  elements associated to the punctures of the surface $S$. Indeed, if $[P_i]\in \SSS(S)$ is represented by a small loop going around one of the punctures of the surface $S$, endowed with a weight $i \in \{1,2,\dots, d-1\}$, a simple isotopy shows that $[P_i]$ in central in $\SSS(S)$, and this for any value of $q$. 
  
\begin{con}
\label{con:WholeCenter}
Suppose that the $d$--root $q^{\frac 1d}$ occurring in the definition of the skein algebra $\SSS(S)$ is such that $q^{\frac 2d}$ is a primitive $n$--root of unity. Then, for every oriented surface $S$ of finite topological type,  the center of $\SSS(S)$ is generated (as a subalgebra) by the skeins $[P_i]$ associated to punctures as above, as well as by the skeins $L^{[\widehat P_d^{(n,i)}]}$ obtained by threading reduced power elementary   polynomials $\widehat P_d^{(n,i)}$ around framed knots $L \subset S\times [0,1]$. 
\end{con}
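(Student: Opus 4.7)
The plan is to follow the strategy of \cite{FroKanBarLe} for the $\SLtwo$ case, adapted to the web setting of $\SLd$. The first step is to upgrade Corollary~\ref{cor:PowerElementaryThreadingCentral} into the statement that threading defines an injective ring homomorphism $\Phi \colon \SSS^{\mathrm{cl}}(S) \to Z(\SSS(S))$ from a commutative ``classical'' skein algebra --- the coordinate ring of $\mathcal X_{\SLd}(S)$ --- into the center. On a weighted framed link $L$ the map is defined by $\Phi([L]) = [L^{[\widehat P_d^{(n,\bullet)}]}]$ and extended additively and multiplicatively. Multiplicativity for \emph{superposed} (rather than disjoint) links is the nontrivial content: one must show that resolving the product of two threaded links via the skein relations of \cite{CKM} produces the threading of the corresponding classical product. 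The transparency Theorem~\ref{thm:PowerElementaryThreadingTransparent} allows one to isotope one threaded copy through another at will, and the symmetric-function identity of Proposition~\ref{prop:PowerElementaryPolMainProp} should absorb the residual combinatorics. Together with the puncture elements $[P_i]$, which are central for every $q$, the image of $\Phi$ is the candidate for the full center.

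The second step is to match Krull dimensions. Under the root-of-unity hypothesis on $q^{\frac{2}{d}}$, the threading construction already supplies enough central elements to make $\SSS(S)$ module-finite over the subalgebra they generate, so $\SSS(S)$ is a prime PI algebra and Posner's theorem gives $\dim Z(\SSS(S)) = \dim \mathcal X_{\SLd}(S)$. Injectivity of $\Phi$ can be checked by a leading-term analysis with respect to a filtration induced by an ideal triangulation, in the spirit of the $\SLtwo$ and $\SLthree$ quantum-trace calculations. A dimension count should then show that $\Phi(\SSS^{\mathrm{cl}}(S))$ together with the puncture skeins $[P_i]$ attains the full Krull dimension of $Z(\SSS(S))$: the threadings account for the ``bulk'' $(2g{-}2{+}p)(d^2{-}1)$ character-variety directions while the $[P_i]$ contribute the remaining puncture trace coordinates.

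The third and hardest step is to turn this dimension match into an actual equality of algebras, ruling out the possibility of additional central elements lying outside the subalgebra generated by $\Phi$ and the $[P_i]$. In the $\SLtwo$ case this is where \cite{FroKanBarLe} invoked the Bonahon-Wong quantum trace embedding into a quantum torus, reducing the question to an explicit sublattice computation inside an abelian group. For $\SLd$ with $d \geq 3$, a comparable quantum-trace framework sharp enough to identify the whole Azumaya locus of $\SSS(S)$ is still under active development, and the web combinatorics make the analogous lattice analysis substantially more delicate. This is the main obstacle, and likely the reason the statement is posed as a conjecture: a complete proof should require either a completed $\SLd$ quantum-trace theory permitting a direct center calculation in an ambient quantum torus, or a purely Poisson-geometric identification of the Poisson center of $\C[\mathcal X_{\SLd}(S)]$ that one can then lift through the Frobenius structure provided by $\Phi$.
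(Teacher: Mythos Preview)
This statement is a \emph{conjecture} in the paper, not a theorem; the paper offers no proof, only the remark that the case $d=2$ is established in \cite{FroKanBarLe}. So there is nothing in the paper to compare your proposal against.

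That said, your proposal is not really a proof either, and you seem to recognize this. Your three-step outline is a reasonable sketch of how one might \emph{try} to attack the conjecture following the $\SLtwo$ template, but each step contains substantial gaps that you correctly flag yourself. In particular: Step~1 presupposes that $\Phi$ is a well-defined algebra homomorphism on all of $\SSS^{\mathrm{cl}}(S)$, which is precisely the content of Conjecture~\ref{con:FrobeniusHomomorphism} in the paper --- another open problem for $d\geq 4$. Step~2 relies on module-finiteness and primeness of $\SSS(S)$ over the threaded subalgebra, neither of which is available in the literature for general $d$. And in Step~3 you explicitly acknowledge that the decisive tool (a sharp $\SLd$ quantum trace or an equivalent substitute) does not yet exist. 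So what you have written is a plausible roadmap, consistent with what the paper's authors likely have in mind, but it is not a proof and the paper does not claim one.
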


See \cite{FroKanBarLe} for a proof of this conjecture in the case where $d=2$.

 The second conjecture is the full $\SLd$ analogue of the main statement underlying the results of \cite{BonWon} for $\SSStwo(S)$. It essentially asserts that the central skeins $L^{[\widehat P_d^{(n,i)}]} \in \SSS(S)$, obtained by threading reduced power elementary polynomials along framed knots, satisfy the skein relations corresponding to $q^{\frac1d}=1$.

\begin{con}
\label{con:FrobeniusHomomorphism}
Let $S$ be an oriented surface of finite topological type.
 If the $d$--root $q^{\frac1d}$ occurring in the definition of the $\SLd$--skein algebra is a root of unity of order $n$ coprime with $2d$, 
 and if the commutative skein algebra $ \mathcal S_{\SLd}^1(S) $ is defined with the convention that $1^{\frac1d}=1$, 
there exists a  algebra homomorphism
 $$
 \Phi \colon \mathcal S_{\SLd}^1(S) \to \SSS(S)
 $$
 with central image 
 such that, for every skein $[L] \in  \mathcal S_{\SLd}^1(S)$ represented by a framed knot $L$ carrying weight $i \in \{ 1,2, \dots, d-1\}$, the image $\Phi\big( [L] \big) = L^{[\widehat P_d^{(n,i)}]}$ is obtained by threading the reduced power elementary   polynomial $\widehat P_d^{(n, i)} \in \Z[e_1, e_2, \dots, e_{d-1}]$ along $L$, in the sense defined in {\upshape\S \ref{sect:Thread}}. 
\end{con}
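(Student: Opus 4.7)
The strategy would parallel the Bonahon-Wong construction for $d=2$, taking the transparency and centrality results of Theorem~\ref{thm:PowerElementaryThreadingTransparent} and Corollary~\ref{cor:PowerElementaryThreadingCentral} as the main technical inputs. First, I would verify that the hypothesis of the conjecture implies those of the corollary: since $q^{\frac 1d}$ has order $n$ with $\gcd(n,2d)=1$, we have $(q^{\frac 1d})^{2n}=1$, hence $q^{\frac{2n}d}=1$; moreover, $q^{2i}=(q^{\frac 1d})^{2di}=1$ would force $n\mid 2di$, hence $n\mid i$ by coprimality, which fails for $2\leq i \leq \frac d2$ provided $n>\frac d2$. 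The remaining small cases would be handled using Remark~\ref{rem:ConditionsOnQ}. Consequently, every threaded knot $L^{[\widehat P_d^{(n,i)}]}$ is central in $\SSS(S)$, so the proposed image of $\Phi$ lies in the center.

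Next, I would construct $\Phi$ in two stages. Stage one: define $\Phi$ on the subalgebra of $\mathcal{S}^1_{\SLd}(S)$ generated by framed links by $\Phi([L])=L^{[\widehat P_d^{(n,i)}]}$, extended multiplicatively over disjoint unions. Well-definedness in this stage is immediate: centrality makes the order of components irrelevant in $\SSS(S)$, matching the commutativity of $\mathcal{S}^1_{\SLd}(S)$, and the threading construction is clearly isotopy-invariant. Stage two: extend to general webs. Since $\mathcal{S}^1_{\SLd}(S)$ is commutative, any web $W$ can be resolved via the $q=1$ skein relations into a $\Z$-linear combination of framed links; define $\Phi([W])$ by applying the threading rule term by term on such a resolution. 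Independence of the resolution becomes the key point: whenever $\sum_k c_k [L_k]=0$ in $\mathcal{S}^1_{\SLd}(S)$, one must verify $\sum_k c_k L_k^{[\widehat P_d^{(n,i)}]}=0$ in $\SSS(S)$. The transparency theorem is crucial here, since it lets threaded knots be freely moved past one another and past any auxiliary web, so relations in $\SSS(S)$ among them are governed by identities in the commutative center; these in turn should reduce to the classical polynomial identities $E_d^{(i)}(A^n)=\widehat P_d^{(n,i)}(E_d^{(1)}(A),\dots,E_d^{(d-1)}(A))$ via Proposition~\ref{prop:PowerElementaryPolMainProp} and its generalizations across merge/split configurations.

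The principal obstacle is the verification step of stage two, in particular showing that the local web relations of Figures~\ref{fig:Skein1}--\ref{fig:Skein3} at $q=1$ are preserved by $\Phi$ when threaded into $\SSS(S)$ at the root of unity $q$. Transparency handles crossings cleanly, but the merge/split and stump relations encode the full representation theory of $\mathrm U_q(\mathfrak{sl}_d)$ and do not obviously factor through threading. The most promising route would be to realize $\Phi$ as originating from a quantum Frobenius homomorphism $\mathrm U_q(\mathfrak{sl}_d)\to \mathrm U_1(\mathfrak{sl}_d)$ at a root of unity (in the spirit of Lusztig's construction), and to transport this via the Cautis-Kamnitzer-Morrison functor to the skein level, so that the entire web structure is handled simultaneously. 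Making this identification precise, together with a clean treatment of the technical conditions on $n$ and the delicate interaction between power operations on webs and the stump conventions, is where the bulk of the work would reside; a direct combinatorial verification via repeated use of Proposition~\ref{prop:PowerElementaryPolMainProp} and its higher-rank analogues is conceivable but likely prohibitive in its intricacy.
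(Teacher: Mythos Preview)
This statement is labeled a \emph{conjecture} in the paper and is not proved there. The paper explicitly presents it as open for general $d$, noting only that the case $d=2$ is due to Bonahon--Wong, that $d=3$ is handled in \cite{Hig2}, and pointing to \cite{Wan, KimLe, GanJorSaf, CosKorLe} for related constructions; there is no proof against which to compare your proposal.

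That said, your outline is a reasonable sketch of the expected strategy, and you correctly isolate the genuine obstacle: stage two, compatibility with the non-crossing web relations (merge/split and stump). Transparency and centrality, which the paper does prove, give you stage one and the crossing relations essentially for free, but they say nothing about why threading $\widehat P_d^{(n,i)}$ should respect, say, the relation of Figure~\ref{fig:Skein1} or Figure~\ref{fig:Skein2}. Your suggestion to descend from Lusztig's quantum Frobenius via the Cautis--Kamnitzer--Morrison functor is exactly the line pursued in the references the paper cites; the difficulty, which those works address only partially or under extra hypotheses (e.g.\ nonempty boundary in \cite{Wan}), is making the categorical Frobenius land in the skein algebra itself rather than in a stated version, and then identifying its effect on a knot with the threading of $\widehat P_d^{(n,i)}$. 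So your proposal is not wrong, but it is a program rather than a proof, and the paper treats it as such.
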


\begin{rem}
 It easily follows from the skein relations that the algebra $\mathcal S_{\SLd}^1(S)$ is generated by knots colored by a  weight $i \in \{ 1,2, \dots, d-1\}$. So the homomorphism $ \Phi \colon \mathcal S_{\SLd}^1(S) \to \SSS(S)$ is unique if it exists. 
\end{rem}

The case $d=2$ of this Conjecture~\ref{con:FrobeniusHomomorphism} was proved in \cite{BonWon} when $n$ is odd. See also \cite{BonWon, Le1} for related statements with other conditions on $q^{\frac12}$. These properties played a fundamental role in the study of the finite-dimensional representation theory \cite{BonWon, FroKanBarLe, GanJorSaf} of $\SSStwo(S)$. 

See \cite{Hig2} for a proof when $d=3$. 

For general $d$, the homomorphism predicted by Conjecture~\ref{con:FrobeniusHomomorphism} is likely to be the Frobenius homomorphism $\Phi \colon \mathcal S_{\SLthree}^1(S) \to \SSSthree(S)$ constructed for $d=3$ in \cite{Hig} (see also \cite{KorQue} for $d=2$), and conjectured to exist for all $d$. See \cite{Wan} for an explicit construction of this Frobenius homomorphism when the surface has nonempty boundary, and \cite{KimLe} for a related construction. Also see \cite{GanJorSaf, CosKorLe} for more general developments.

\bibliographystyle{amsalpha}
\bibliography{CentralElements}

\end{document}